\newcommand{\one}{{\bf 1}}
\newcommand{\odin}[1]{{\mbox {\bf I}}_{\{#1\}}}
\newcommand{\witi}{\widetilde}
\newcommand{\fracd}[2]{\frac {\displaystyle #1}{\displaystyle #2 }}
\newcommand{\nn}{{\mathbb N}}
\newcommand{\rr}{{\mathbb R}}
\newcommand{\zz}{{\mathbb Z}}
\newcommand{\cald}{{\mathcal D}}
\newcommand{\cals}{{\mathcal S}}
\newcommand{\calr}{{\mathcal R}}
\newcommand{\calx}{{\mathcal X}}
\newcommand{\ba}{{\bf a}}
\newcommand{\bB}{{\bf b}}
\newcommand{\be}{{\bf e}}
\newcommand{\bk}{{\bf K}}
\newcommand{\bm}{{\bf m}}
\newcommand{\bq}{{\bf q}}
\newcommand{\bx}{{\bf x}}
\newcommand{\veps}{\varepsilon}
\newcommand{\beq}{\begin{eqnarray*}}
\newcommand{\feq}{\end{eqnarray*}}
\newcommand{\beqn}{\begin{eqnarray}}
\newcommand{\feqn}{\end{eqnarray}}
\newcommand{\as}{\mbox{a.s.}}
\newcommand{\proda}{{\mbox {\Large $\Pi$}}}
\newtheorem{theorem}{Theorem}
\makeatletter \@addtoreset{theorem}{section}\makeatother
\newtheorem{definition}[theorem]{Definition}
\newtheorem{lemma}[theorem]{Lemma}
\newtheorem{assume}[theorem]{Assumption}
\newtheorem*{theorema*}{Theorem~A}
\newtheorem*{theoremb*}{Theorem~B}
\newtheorem*{theorem*}{Theorem}
\newtheorem{proposition}[theorem]{Proposition}
\begin{document}
\begin{frontmatter}
\title{Random linear recursions with dependent coefficients\tnoteref{l1}}
\tnotetext[l1]{Most of this work was done during Summer' 09 Research
Experience for Undergraduate (REU) Program at Iowa State University.
D.H, R.R, and A.S. were partially supported by
NSF Award DMS-0750986. J.S. thanks the Department of Mathematics at Iowa State University
for the hospitality and support during her visit there.}
\author{Arka P. Ghosh\fnref{a1,a2}}
\fntext[a1]{Department of Mathematics, Iowa State University, Ames, IA 50011, USA}
\author{Diana Hay\fnref{a1}}
\fntext[a2]{Department of Statistics, Iowa State University, Ames, IA 50011, USA}
\author{Vivek Hirpara\fnref{a1,a3}}
\fntext[a3]{Department of Economics, Iowa State University, Ames, IA 50011, USA}
\author{Reza Rastegar\fnref{a1}}
\author{Alexander Roitershtein\corref{cor1}\fnref{a1}}
\cortext[cor1]{Corresponding author. E-mail: roiterst@iastate.edu}
\author{Ashley Schulteis\fnref{a4}}
\fntext[a4]{Department of Math, Physics and CS, Wartburg College, Waverly, IA 50677, USA}
\author{Jiyeon Suh\fnref{a5}}
\fntext[a5]{Department of Math, Grand Valley State University, Allendale, MI 49401, USA}
\begin{abstract}
We consider the equation $R_n=Q_n+M_n R_{n-1},$ with random non-i.i.d. coefficients $(Q_n,M_n)_{n\in\zz}\in \rr^2,$
and show that the distribution tails of the stationary solution to this equation
are regularly varying at infinity.
\end{abstract}
\begin{keyword}
stochastic difference equations
\sep
random linear recursions
\sep
regular variation
\sep
Markov models
\sep
chains of infinite order
\sep
chains with complete connections
\sep
regenerative structure
\sep
Markov representation.
\MSC[2000] 60K15 \sep  60J20.
\end{keyword}
\end{frontmatter}
\section{Introduction and statement of results}
\label{intro}
\subsection{Outline}
Let $(Q_n,M_n)_{n \in \zz}$ be $\rr^2$-valued random pairs and
consider the recursion
\beqn
\label{main-def}
R_n=Q_n+M_n R_{n-1},~~~~~n \in \nn, ~R_n
\in \rr.
\feqn
This equation has a wide variety of real world and theoretical applications,
see \citep{embre-goldie,vervaat}. Sufficient (in fact, close to necessary) conditions
for $R_n$ to converge in law, independently of $R_0,$ to $R= Q_0+\sum_{n=1}^{\infty} Q_{-n} \prod_{i=0}^{n-1} M_{-i},$
can be found in \citep{brandt}.
\par
The distribution tails of $R$ were shown to be regularly varying  in \citep{kesten-randeq,goldie,trakai75,grey} provided that
the pairs $(Q_n,M_n)_{n\in\zz}$ form an i.i.d. sequence. In the setup of \citep{kesten-randeq,goldie} the tails
are in fact power tailed. Recall that $f:\rr\to\rr$ is called regularly varying if $f(t)=t^\alpha L(t)$ for some $\alpha\in\rr$
where $L(t)$ is a slowly varying function, that is $L(\lambda t)\sim L(t)$ for all $\lambda >0.$
Here and henceforth $f(t)\sim g(t)$ (as a rule, we omit ``as $t\to\infty$") means $\lim_{t\to\infty} f(t)/g(t)=1.$
\par
The mechanisms leading to regularly varying tails of $R$ are different in
\citep{kesten-randeq, goldie} versus \citep{trakai75,grey}. In the former case, there exists a critical exponent $\alpha$
such that $E(|M_n|^\alpha)=1$ and $E(|Q_n|^\alpha)<\infty.$ Then, $R$ is heavy tailed essentially
because of one atypical fluctuation of $S_n=\sum_{i=0}^{n-1} \log |M_{-i}|,$
which follows from renewal arguments for $S_n$ under exponential tilt. Remarkably,
while no particular tail behavior is assumed on inputs, exact power law tails appear in the output.
In contrast, the critical exponent is not available in the latter case, where the tails of $Q_n$ are assumed to be regular varied.
The second case thus provides an instance of the phenomenon ``regular variation in, regular variation out"
for \eqref{main-def}. This setup is appealing because it enables one to gain insight
into the structure and fine properties of the sequence $(R_n)_{n\in\nn}$, including the asymptotic behavior of its partial sums and
extremes, see for instance \citep{maxima,ppps,ldp-rec,samorachev,langevin4}.
\par
The goal of this paper is to study \eqref{main-def} with non-i.i.d. coefficients.
The extension is desirable in many, especially financial, applications. See \citep{infor, msee}.
We remark that an extension of the main result of \citep{kesten-randeq, goldie} to a
Markov setup has been obtained in \citep{saporta,omar,msee}.
\begin{definition}
\label{inde}
The coefficients $(Q_n,M_n)_{n\in\zz}$ are said to be induced by a sequence of random variables
$(X_n)_{n \in \zz},$ each valued in a countable set $\cald,$ if there exist
independent random variables $(Q_{n,i}, M_{n,i})_{n\in\zz,i\in\cald}\in\rr^2$ such that for a fixed $i
\in\cald,$ $(Q_{n,i},M_{n,i})_{n\in\zz}$ are i.i.d,
\beqn
\label{gigs}
Q_n=\sum_{j\in\cald}Q_{n,j}\odin{X_n=j}=Q_{n,X_n},
M_n=\sum_{j\in\cald}M_{n,j}\odin{X_n=j}=M_{n,X_n},
\feqn
and $(Q_{n,i}, M_{n,i})_{n\in\zz,i\in\cald}$ is independent of $(X_n)_{n \in \zz}.$
\end{definition}
Note that if $(X_n)_{n\in\zz}$ is a finite Markov chain, then
\eqref{gigs} defines a {\em Hidden Markov Model} (HMM), see \citep{technion}
for a survey of HMM and their applications. Heavy tailed HMM
are considered for instance in \citep{hmht}, see also references therein.
\subsection{Markov-dependent coefficients: regular variation in, regular variation out}
First, we will impose the following conditions on the coefficients in \eqref{main-def}.
\begin{assume}
\label{agg}
Assume that the sequence $(Q_{n,i}, M_{n,i})_{n\in\zz,i\in\cald}\in\rr^2$ is induced by
a stationary irreducible Markov chain $(X_n)_{n \in \zz}$ on a countable state space $\cald.$
Furthermore, suppose that there exists a constant $\alpha>0$ s.~t.:
\item[(A1)]
There is a slowly varying function $L(t)$ and two sequences of
constants $(q_i^{(\eta)})_{i\in\cald},$ $\eta\in\{-1,1\},$
such that $\lim_{t\to\infty} \frac{P(Q_{n,i}\cdot \eta>t)}{t^{-\alpha}L(t)}=q_i^{(\eta)}$ uniformly in $i\in\cald.$
Moreover, we have $\sum_{j\in\cald}q^{(1)}_j>0$ and $\sup_{j\in\cald} \max\{q^{(1)}_j,q^{(-1)}_j\}<\infty.$
\item[(A2)] There exists $\beta>\alpha$ such that $\sup_{i\in\cald} E(|M_{0,i}|^\beta)<\infty.$
\item[(A3)] Let $m_i^{(\eta)}=E\bigl(|M_{0,i}|^\alpha \odin{M_{0,i}\cdot \eta >0}\bigr),$
$m_i=m_i^{(-1)}+m_i^{(1)}.$ Then $\sup\limits_{i\in\cald} m_i<1.$
\item[(A4)]  $\lim_{\veps\to 0+}P(M_{1,i}\leq \veps)=P(M_{1,i}\leq 0)$ uniformly in $i\in\cald.$
\end{assume}
Let $B_b = \{\bx=(x_n)_{n \in \cald}: \sup_{n \in \cald}|x_n|<+\infty\}\subset\rr^\cald$
be equipped with the norm $\|\bx\|=\sup_{n \in \cald}|x_n|.$
Let $H$ be transition matrix of the stationary backward chain $X_{-n},$
that is $H(i,j)=P(X_n=j|X_{n+1}=i).$ Define matrices $G_\eta,$ $\eta\in\{-1,1\},$ as
$G_\eta(i,j)=m_i^{(\eta)}H(i,j)$ and set $G(i,j)=m_iH(i,j),$ $i,j\in\cald.$
The spectral radius of $G$ (operator in $B_b$) is less than 1 by (A3).
\par
Under Assumption~\ref{agg}, the tails of $R$ and $Q_0$ have similar structure.
Denote $\bq^{(\eta)}=\bigl(q_i^{(\eta)}\bigr)_{i\in\cald},$ $\bm^{(\eta)}=\bigl(m_i^{(\eta)}\bigr)_{i\in\cald},$
$\eta\in\{-1,1\},$ and $\bq=\bq^{(1)}+\bq^{(-1)}.$
\begin{theorem}
\label{vart}  Let Assumptions \ref{agg} hold and
suppose that $P(M_{0,i}>0)=1$ for all $i\in\cald.$ Then, for all $i \in\cald,$
$P(R>t|X_0=i)~\sim K_i t^{-\alpha}L(t),$
where $\bk=(K_i)_{i\in\cald}\in B_b$ is
defined by $\bk=(I-G)^{-1}\bq^{(1)}.$
\end{theorem}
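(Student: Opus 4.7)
The plan is to write the stationary solution as the almost surely absolutely convergent series
\[
R=Y_0+\sum_{n=1}^\infty Y_n,\qquad Y_0:=Q_0,~Y_n:=Q_{-n}Z_n,~Z_n:=\prod_{k=0}^{n-1}M_{-k},
\]
compute the tail of each $Y_n$ conditional on $X_0=i$, sum the resulting constants via the Neumann expansion of $(I-G)^{-1}$, and finally justify that the tail of the whole series equals the sum of the tails.

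For a single $Y_n$ with $n\ge 1$, I condition on $\calf_X:=\sigma(X_k:k\in\zz)$. By Definition~\ref{inde}, conditional on $\calf_X$ the variable $Q_{-n}=Q_{-n,X_{-n}}$ is independent of $Z_n$, while (A1) gives $P(Q_{-n}>t\mid\calf_X)\sim q_{X_{-n}}^{(1)}t^{-\alpha}L(t)$ uniformly. Since $P(M_{0,i}>0)=1$ implies $Z_n>0$ a.s., a Breiman-type argument (using Potter's bounds together with (A2), which supplies an integrable majorant in $Z_n$) yields
\[
P(Y_n>t\mid X_0=i)\sim E\bigl[q_{X_{-n}}^{(1)}Z_n^\alpha\,\bigm|\,X_0=i\bigr]\cdot t^{-\alpha}L(t).
\]
Because $E(|M_{0,\ell}|^\alpha)=m_\ell$ and the conditional law of $X_{-1},\ldots,X_{-n}$ given $X_0=i$ is governed by the backward transition matrix $H$, iterated conditioning identifies the expectation with $(G^n\bq^{(1)})(i)$ for $G(i,j)=m_iH(i,j)$; the case $n=0$ produces $q_i^{(1)}=(G^0\bq^{(1)})(i)$.

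Assumption (A3) gives $\|G\|_\infty=\sup_im_i<1$, so $G$ is a contraction on $B_b$ and $\sum_{n\ge 0}(G^n\bq^{(1)})(i)=((I-G)^{-1}\bq^{(1)})(i)=K_i$ is finite. To pass from the per-term asymptotics to the tail of the whole series, split $R=R_N+R_N'$ with $R_N=\sum_{n=0}^N Y_n$. For each fixed $N$, the asymptotic $P(R_N>t\mid X_0=i)\sim\sum_{n=0}^N(G^n\bq^{(1)})(i)\cdot t^{-\alpha}L(t)$ follows from the ``one big jump'' principle for a finite sum of regularly varying variables (since the $Q_{-n}$ are independent, $P(Y_n>t,\,Y_m>t)=O(t^{-2\alpha}L(t)^2)$ for $n\ne m$). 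The remainder $R_N'$ is controlled by splitting each $Q_{-n}=Q_{-n}\odin{|Q_{-n}|>\veps t}+Q_{-n}\odin{|Q_{-n}|\le\veps t}$: the large-value piece is dispatched by a union bound of order $O(t^{-\alpha}L(t))\sum_{n>N}(G^n\bq)(i)$, which vanishes as $N\to\infty$ by the contraction of $G$; the small-value piece is handled by Markov's inequality at an exponent $\gamma\in(\alpha,\beta)$, using Karamata's theorem to obtain $E(|Q|^\gamma\odin{|Q|\le\veps t})=O(t^{\gamma-\alpha}L(t))$ and (A2) together with continuity of $\gamma\mapsto\sup_iE|M_{0,i}|^\gamma$ at $\alpha$ to make the conditional product moment $E(Z_n^\gamma\mid X_0=i)$ decay geometrically in $n$. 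Letting $N\to\infty$ after $t\to\infty$ completes the argument.

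The main technical obstacle lies in this last step. Because $E|Q_{0,i}|^\alpha=\infty$ in general, one cannot directly moment-bound the infinite remainder by a single $\gamma$-th moment with $\gamma>\alpha$, and the $t$-dependent truncation must be threaded so that both the large-value piece (handled by (A1)) and the small-value piece (handled by moment estimates and the contraction of $G$) vanish in the correct order in $t$ and uniformly as $N\to\infty$. Once this bookkeeping is in place, the identification of $K_i$ with $((I-G)^{-1}\bq^{(1)})(i)$ is a purely algebraic consequence of the Neumann expansion.
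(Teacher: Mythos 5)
Your route is genuinely different from the paper's. The paper never expands the perpetuity series: it proves a conditional, uniform version of Grey's Lemma~2 (Proposition~\ref{klemma}), showing that one application of $Y\mapsto Q_1+M_1Y$ turns tail constants $\bc$ into $\bq^{(1)}+G\bc$, and then sandwiches $R$ between two monotone iterations of the recursion --- one started from a dominating variable $Z$ (Lemmas~\ref{domin},~\ref{lisup}), one from a stochastically smaller start built from $\{\calr>0,\,Q_0>t\}$ (Lemma~\ref{linf}) --- so that $(I-G)^{-1}\bq^{(1)}$ emerges as the common limit of $\bq^{(1)}+G\bq^{(1)}+\cdots+G^{n-1}\bq^{(1)}+G^n(\cdot)$. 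You instead apply Breiman's lemma conditionally on the environment path term by term (your identification of $E\bigl[q^{(1)}_{X_{-n}}Z_n^\alpha\mid X_0=i\bigr]$ with $(G^n\bq^{(1)})(i)$ is correct) and then must show that the tail of the series is the sum of the tails, in the Grincevi\v{c}ius/Resnick--Willekens style. That buys a more explicit argument, dispensing with the dominating variable and essentially with (A4), but it concentrates all the difficulty in the remainder estimate, and there your sketch is not yet a proof.

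Two steps as written would fail. First, for the large-value piece you truncate every $Q_{-n}$ at the same level $\veps t$ and invoke ``a union bound of order $O(t^{-\alpha}L(t))\sum_{n>N}(G^n\bq)(i)$''; but the plain union bound is $\sum_{n>N}P(|Q_{-n}|>\veps t)$, an infinite sum of terms each of order $t^{-\alpha}L(t)$, which diverges --- the factor $(G^n\bq)(i)$ can only come from the weight $Z_n$, i.e.\ from events $\{|Q_{-n}|Z_n>c_n t\}$. The repair is to allocate $n$-dependent thresholds $c_n\propto\rho^{n}$ with $\sum_{n>N}c_n\le 1$, bound each $P(|Q_{-n}|Z_n>c_n t\mid X_0=i)$ by a uniform Potter/Breiman estimate involving $E(Z_n^{\alpha\pm\veps}\mid X_0=i)$, and observe that by H\"older interpolation between (A2) and (A3) one has $\sup_i E\bigl(M_{0,i}^{\alpha+\veps}\bigr)<1$ for small $\veps>0$, so these conditional moments decay geometrically and $\rho$ can be taken close enough to $1$; this same interpolation is what your ``continuity of $\gamma\mapsto\sup_iE|M_{0,i}|^\gamma$'' must mean, and it is also needed for the small-value piece. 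Second, the claim $P(Y_n>t,Y_m>t)=O(t^{-2\alpha}L(t)^2)$ ``since the $Q_{-n}$ are independent'' is unjustified: $Y_n$ and $Y_m$ share the factors $M_0,\dots,M_{-(n-1)}$, and for $m>n$ the variable $Q_{-n}$ is dependent on $Z_m$ through the pair $(Q_{-n},M_{-n})$. What you need is only $P(Y_n>t,Y_m>t)=o\bigl(t^{-\alpha}L(t)\bigr)$ for fixed $n\ne m$, which follows by conditioning on everything except $Q_{-m}$ and a H\"older bound in $Z_m$ at an exponent below $\beta$. Finally, since $\cald$ may be countably infinite, passing the conditional Breiman asymptotics through the expectation over the backward path given $X_0=i$ requires the uniform constants of (A1)--(A2) to produce a dominating function. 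With these repairs your plan does go through and reproduces $K_i=\bigl((I-G)^{-1}\bq^{(1)}\bigr)(i)$.
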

Theorem~\ref{vart} yields its analog without the restriction $P(M_0>0)=1.$
\begin{theorem}
\label{negs}
Let Assumption~\ref{agg} hold. Then,
$P(R\cdot \eta>t|X_0=i)~\sim K_i^{(\eta)} t^{-\alpha}L(t)$ for $\eta\in\{-1,1\},$
where $\bk^{(\eta)}=\bigl(K_i^{(\eta)}\bigr)_{i\in\cald}\in B_b$ are given by
\beq
\bk^{(\eta)}&=&\frac{1}{2}\Bigl((I-G)^{-1}\bigl(\bq^{(1)}+\bq^{(-1)}\bigr)+\eta(I-G_+ +G_-)^{-1}\bigl(\bq^{(1)}-\bq^{(-1)}\bigr)\Bigr).
\feq
\end{theorem}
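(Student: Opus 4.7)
My plan is to reduce Theorem~\ref{negs} to Theorem~\ref{vart} by an a priori tail estimate, and then to derive and solve a closed linear system for $\bk^{(1)}$ and $\bk^{(-1)}$.

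First I would apply Theorem~\ref{vart} to the majorising recursion $\bar R_n=|Q_n|+|M_n|\bar R_{n-1}$, driven by the same chain $(X_n)$ with positive induced coefficients $|Q_{n,i}|,|M_{n,i}|$. Assumption~\ref{agg} transfers to this process: the upper-tail constants become $q_i^{(1)}+q_i^{(-1)}$, the $\alpha$-moment constant of $|M_{n,i}|$ is exactly $m_i<1$, and (A4) for $|M|$ follows from (A4) applied to $M$ and to $-M$. Since $|R|\le\bar R$ pointwise, Theorem~\ref{vart} then yields a uniform-in-$i$ a priori bound $P(|R|>t\mid X_0=i)=O(t^{-\alpha}L(t))$.

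Next I would use the stationary one-step identity $R=Q_{0,X_0}+M_{0,X_0}R_{-1}$, conditioning on $X_0=i$ and $X_{-1}=j$ so that $R_{-1}$ has the conditional law of $R$ given $X_0=j$ and is independent of $(Q_{0,i},M_{0,i})$. Splitting the event $\{M_{0,i}R_{-1}\eta>t\}$ by the sign of $M_{0,i}$ and applying Breiman's lemma to each piece---using (A1), (A2), the a priori bound, and (A4) to dispose of a shrinking neighbourhood of zero of $M_{0,i}$---gives, uniformly in $i\in\cald$ and for each $\eta\in\{-1,1\}$,
\[
K_i^{(\eta)}(t):=\frac{P(R\eta>t\mid X_0=i)}{t^{-\alpha}L(t)}=q_i^{(\eta)}+(G_+\bk^{(\eta)}(t))_i+(G_-\bk^{(-\eta)}(t))_i+o(1).
\]
Adding the two equations gives $(I-G)(\bk^{(1)}(t)+\bk^{(-1)}(t))=\bq+o(1)$, and subtracting gives $(I-G_++G_-)(\bk^{(1)}(t)-\bk^{(-1)}(t))=\bq^{(1)}-\bq^{(-1)}+o(1)$. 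Since $\|G\|_\infty=\sup_i m_i<1$ by (A3), and $|G_+-G_-|\le G$ entrywise so $\|G_+-G_-\|_\infty<1$, both $(I-G)^{-1}$ and $(I-G_++G_-)^{-1}$ are bounded operators on $B_b$. Inverting the two decoupled systems, adding, and halving then identifies $\bk^{(\eta)}=\lim_t\bk^{(\eta)}(t)$ as the expression in the theorem.

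The main obstacle will be upgrading the pointwise-in-$t$ system above to a genuine $B_b$-norm limit for $\bk^{(\eta)}(t)$. My plan is a contraction argument on the oscillation: by Step~1 the family $\{\bk^{(\eta)}(t)\}_{t\ge 1}$ is bounded in $B_b$, so any two subsequential limits both satisfy the limiting linear system, forcing their difference into the kernels of $I-G$ and of $I-G_++G_-$, which are trivial by the spectral bounds. The uniformity in $i$ of the $o(1)$ terms---needed for $B_b$-norm convergence as opposed to merely pointwise---is supplied by the uniformity assertions in (A1) and (A4) together with the uniform a priori bound from Step~1.
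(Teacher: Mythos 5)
Your Step 1 (domination of $R$ by the recursion driven by $(|Q_n|,|M_n|)$, handled via Theorem~\ref{vart}) and your concluding linear algebra (add and subtract the two equations, then invert $I-G$ and $I-G_++G_-$, both invertible by (A3)) coincide with the paper's proof. The gap is in the middle step. The ``same-$t$'' system $K_i^{(\eta)}(t)=q_i^{(\eta)}+(G_+\bk^{(\eta)}(t))_i+(G_-\bk^{(-\eta)}(t))_i+o(1)$ is not a consequence of Breiman's lemma: Breiman/Grey-type statements need the tail of the input variable to be regularly varying, i.e.\ they need $K_j^{(\eta)}(t)$ to converge, which is precisely the conclusion you are trying to establish. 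Without convergence, the one-step identity only expresses $P(R\cdot\eta>t\mid X_0=i)$ through the tail of $R$ evaluated at the random scales $(t-Q_{1,i})/M_{1,i}$, which are spread over a whole multiplicative range; if $K_j^{(\eta)}$ oscillates (say like $2+\sin\log t$, which is perfectly compatible with your a priori bound), its values at those scales need not be close to its value at $t$, so no relation at a common $t$ with a uniform $o(1)$ error can be extracted. The same defect undermines Step 3: convergence along a subsequence $t_n$ controls $K$ only at the scales $t_n$, not at $t_n/m$ for the relevant range of $m$, so you cannot pass to the limit in your (unproved) system to conclude that subsequential limits solve the limiting equations; the uniqueness-of-solutions part of your argument is fine but has nothing to act on.

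The paper closes this loop with a device you need in some form: an extension of Grey's Lemma~4 (Lemma~\ref{knegs-lemma}) giving separate $\limsup$ and $\liminf$ inequalities that hold with no convergence hypothesis, namely $\ba^{(\eta)}\le\bq^{(\eta)}+G_+\ba^{(\eta)}+G_-\ba^{(-\eta)}$ and $\bB^{(\eta)}\ge\bq^{(\eta)}+G_+\bB^{(\eta)}+G_-\bB^{(-\eta)}$, where $\ba,\bB$ are the $\limsup$/$\liminf$ vectors for $R$ (finite by your Step 1). Summing over $\eta$ and using that $G$ is entrywise nonnegative with norm $\sup_i m_i<1$ gives $\ba\le(I-G)^{-1}\bq\le\bB\le\ba$, which forces $\ba^{(\eta)}=\bB^{(\eta)}$, i.e.\ the limits exist; only then do the one-step inequalities become the exact system, after which your add/subtract/invert computation (identical to the paper's) yields the stated $\bk^{(\eta)}$. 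Note that the existence of the limits must come from the summed equation with the nonnegative operator $G=G_++G_-$; a contraction cannot be run on the difference alone, since $G_+-G_-$ has no sign, and your correct observation that $I-G_++G_-$ is invertible is only of use after convergence is secured.
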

Theorem~\ref{vart} and~\ref{negs} are proved in Sections~\ref{proofvart} and~\ref{proofa},
respectively.
\subsection{Kesten's power law for coefficients induced by chains of infinite order}
We next consider coefficients induced by process with infinite memory.
\begin{definition}
\label{typc}
A C-chain is a stationary process $(X_n)_{n \in \zz}$ taking values in a finite set (alphabet)
$\cald$ such that
\item [(i)] For any $i_1,i_2,\ldots,i_n \in \cald,$ $P(X_1=i_1,X_2=i_2,\ldots,X_n=i_n)>0.$
\item[(ii)] For any $i_0 \in \cald$ and any sequence $(i_n)_{n \geq 1} \in \cald^\nn,$
the following limit exists:
\beq
\lim_{n \to \infty} P(X_0=i_0|X_{-k}=i_k,~1 \leq k \leq n)=P(X_0=i_0|X_{-k}=i_k,~k \geq 1),
\feq
where the right-hand side is a regular version of the conditional
probabilities.
\item[(iii)] (fading memory)
For $n \geq 0$ let
\beq
\gamma_n=\sup \left\{ \left|
\frac{P(X_0=i_0|X_{-k}=i_k,~k \geq 1)}{P(X_0=j_0|X_{-k}=j_k,~k \geq 1)} -
1\right|:i_k=j_k,~k=1,\ldots,n \right\}.
\feq
Then, the numbers $\gamma_n$ are all finite and $\limsup_n \log \gamma_n/n <0.$
\end{definition}
C-chains are a particular case of {\em chains
of infinite order} ({\em chains with complete connections}), see e.g.
\citep{kaijser,iosif-grigor}. The distributions of C-chains (a particular case of
$g$-measures introduced in \citep{keane72}) are Gibbs states
in the sense of Bowen (also known as Dobrushin-Lanford-Ruelle states),
see e.g. \citep{bowen,lalley-reg}.
\par
We have, see also \citep{berbee,fm1},
\begin{theorema*}
\citep{lalley-reg}
\label{lbrt}
Let $(X_n)_{n \in \zz}$ be a C-chain with alphabet $\cald,$
$\cals=\nolinebreak\bigcup_{n\geq 1} \cald^n,$ and $\zeta:\cals \to \cald$
be the projection into the last coordinate, i.e. $\zeta \bigl( (s_1,\ldots,s_n) \bigr)=s_n.$
Then (in an enlarged probability space) there exist a stationary irreducible Markov chain $(Y_n)_{n \in \zz}$
on $\cals,$ and constants $r \in \nn,$ $\delta>0,$  s. t.
the following Markov representation holds: $X_n=\zeta(Y_n),$ $n\in\zz.$
\item[] Furthermore, for any $(y_n)_{n\in\nn}\in\cald^\nn$ and $s\in\nn,$
\item [(i)]
$P\bigl(Y_{n+1}=(x_1,x_2,\ldots,x_t) |Y_n=(y_1,y_2,\ldots,y_s) \bigr)=0$
unless either $t=1$ or $t=s+1$ and $x_i=y_i$ for all $i\leq s,$
\item [(ii)]
$P\bigl(Y_{n+1}=(y) |Y_{n+1}\in\cald,\,Y_n=(y_1,y_2,\ldots,y_s) \bigr)=P\bigl(Y_0=(y) |Y_0\in \cald).$
\item [(iii)] $P\bigl(Y_{n+1} \in \cald |Y_n=(y_1,y_2,\ldots,y_{mr}) \bigr)=\delta,$
for all $m \in \nn.$
\item [] Moreover, without loss of generality, we can assume that
\item [(iv)]  $r$ is {\bf \em even} (see the beginning of the proof in \citep[p.~1266]{lalley-reg}).
\end{theorema*}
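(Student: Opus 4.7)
The plan is to construct $(Y_n)$ by tracking, for each $n$, the suffix of the $X$-process since the last ``regeneration'' time, with regeneration permitted only when the current suffix has length divisible by $r$. The driving idea is a minorization estimate for the one-step conditional kernel of the C-chain, which allows one to decompose this kernel as a convex combination of a fixed probability measure $\nu$ on $\cald$ and a residual kernel, and then implement a splitting construction that endows the coupled chain with built-in regeneration epochs.

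First I would establish the minorization lemma: for some integer $r$, constant $\delta \in (0,1)$, and probability measure $\nu$ on $\cald$,
\[
P\bigl(X_0 = i_0 \mid X_{-k} = x_k,\ k \geq 1\bigr) \;\geq\; \delta\,\nu(i_0) \qquad \text{for all } i_0 \in \cald,
\]
uniformly over admissible infinite pasts $(x_k)_{k\geq 1}$. The argument combines the fading-memory condition (iii), which bounds the multiplicative discrepancy between conditional distributions that agree on a long finite prefix, with the positivity condition (i), which gives a strict lower bound on every entry of each truncated conditional law on the finite alphabet $\cald$. Choosing $r$ large enough that $\gamma_r$ is small and taking $\nu$ proportional to the pointwise minimum of the finitely many length-$r$ truncated kernels yields the claim.

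Given the minorization, write $f(i \mid \cdot) = \delta\,\nu(i) + (1-\delta)\,g(i \mid \cdot)$, where $g$ is itself a probability kernel. On an enlarged space carrying a sequence of independent $\mathrm{Uniform}(0,1)$ variables $(U_n)_{n \in \zz}$, I would define the transition from $Y_n = (y_1,\ldots,y_s)$ as follows: if $s$ is not a multiple of $r$, draw $X_{n+1}$ from the full conditional $f(\cdot \mid \text{past})$ and set $Y_{n+1} = (y_1,\ldots,y_s,X_{n+1})$; if $s$ is a multiple of $r$, use $U_{n+1}$ to flip a Bernoulli$(\delta)$ coin, and on success draw $X_{n+1} \sim \nu$ and set $Y_{n+1} = (X_{n+1})$, on failure draw $X_{n+1}$ from $g(\cdot \mid \text{past})$ and set $Y_{n+1} = (y_1,\ldots,y_s,X_{n+1})$. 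This preserves the C-chain law of $(X_n)$ while equipping $(Y_n)$ with regeneration times of conditional rate exactly $\delta$. Properties (i)--(iii) are then immediate: (i) from the binary nature of each transition (append one symbol or restart at length one); (ii) because on the regeneration branch $X_{n+1} \sim \nu$ independently of $Y_n$, and $\nu$ must therefore coincide with the common law of $Y_{n+1}$ given $\{Y_{n+1} \in \cald\}$; (iii) by construction. Stationarity is obtained by running the splitting over all of $\zz$ using the two-sided stationary $(X_n)$ and independently extending $(U_n)$; irreducibility on the set of reachable words follows from (i). For (iv), if the constructed $r$ is odd, replace it by $2r$: the same minorization holds at every multiple of $2r$, so the construction goes through unchanged.

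The main technical obstacle is Markovianity of $(Y_n)$ in the ``failure'' branch, because the residual kernel $g(\cdot \mid \text{past})$ a priori depends on the whole infinite past of $(X_n)$ rather than only on $Y_n$. To obtain a genuine Markov chain on $\cals$ one must work with an $r$-step truncated kernel $f_r(\cdot \mid y_1,\ldots,y_r)$ and absorb the discrepancy between $f$ and $f_r$ into a slightly sharpened form of the minorization; the exponential decay of $\gamma_n$ afforded by condition (iii) is precisely what allows this sharpening with a \emph{uniform} $\delta$, and is also the reason regeneration is restricted to times whose memory length is a multiple of $r$. Verifying this uniformity carefully, and checking that the resulting transition kernel on $\cals$ really does depend on $Y_n$ alone, is the technical heart of the argument.
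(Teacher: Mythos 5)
You should first note that the paper offers no proof of this statement at all: Theorem~A is imported verbatim from \citep{lalley-reg}, so the only possible comparison is with Lalley's own construction. Your overall plan --- a Doeblin-type splitting of the conditional kernel of the C-chain, with regeneration permitted only when the word accumulated since the last regeneration has length divisible by $r$ --- is indeed the spirit of the regenerative representation, and your minorization lemma is unproblematic (in fact, finiteness of $\gamma_0$ together with positivity of the finite-dimensional distributions already gives a uniform one-step bound $P(X_0=i\,|\,\mbox{past})\geq \delta\nu(i)$, so the role of $r$ is not really to produce the minorization).

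The genuine gap is Markovianity of $(Y_n)$, and it is not confined to the ``failure'' branch as you suggest. In your construction the next symbol is drawn from $f(\cdot\,|\,\mbox{past})$ whenever the current length is not a multiple of $r$, and from the residual $g(\cdot\,|\,\mbox{past})$ on a failed coin flip; both kernels depend on the infinite past of $(X_n)$ \emph{before} the last regeneration, so the transition law of $Y_n$ is not a function of $Y_n$ alone at any non-regeneration step. Your proposed repair --- replace $f$ by an $r$-step truncated kernel $f_r$ and ``absorb the discrepancy into a sharpened minorization'' --- is precisely the step that needs proof, and as stated it does not go through: if you literally substitute the truncated kernel you change the law of the concatenated sequence, so $\zeta(Y_n)$ no longer reproduces the given C-chain, while if you try to push the discrepancy into the residual you must verify that the residual remains a nonnegative probability kernel simultaneously for all admissible pasts and at every multiple of $r$. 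Doing this requires decomposing the infinite-memory kernel into a consistent mixture of finite-memory kernels (depending on the last $r,2r,3r,\dots$ symbols), with the exponential decay $\limsup_n \log\gamma_n/n<0$ guaranteeing that the mass peeled off at each level can be taken to be the \emph{same} constant $\delta$ --- this is what makes the word-valued process genuinely Markov and yields property (iii) with probability exactly $\delta$ rather than merely at least $\delta$. That hierarchical decomposition is the actual content of Lalley's theorem and is missing from your sketch; the remaining points (two-sided stationary extension, properties (i)--(ii), and (iv) by doubling $r$) are routine only once it is in place.
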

\noindent
The following result is proved in Section~\ref{gipr}.
\begin{theorem}
\label{inf-gk}
Assume that the coefficients $(Q_n,M_n)_{n\in\zz}$ are induced by a $C$-chain $(X_n)_{n\in\zz}.$
Furthermore, suppose that
\item [(i)]
$|Q_0| < q_0$ and $m_0^{-1}< |M_0| <m_0,$ $\as,$ for some constants $q_0>0,$ $m_0>0.$
\item [(ii)]
$\Lambda(\beta):=\limsup\limits_{n \to \infty} \fracd{1}{n} \log E
\left(\prod_{i=0}^{n-1} |M_i|^\beta \right) \geq 0$ changes its sign in $(0,\infty).$
\item [(iii)] $P(\log |M_0|=\delta \cdot k~\mbox{\rm for some}~k\in \zz|M_0 \neq 0)<1$ for all $\delta>0.$
\par
Let $\calx=(X_n)_{n\geq 0}.$ Then, with $\alpha>0$ defined below in \eqref{kappa-239}:
\item [(a)] $\lim\limits_{t \to \infty} t^\alpha P(R\cdot \eta>t|\calx)=K_\eta(\calx)$
for some bounded $K_\eta:\cald^{\zz_+} \to \rr_+,$ $\eta\in\{-1,1\}.$ Moreover,
the convergence is uniform on $\calx.$
\item [(b)] If $P(M_0<0)>0$ then $P\bigl(K_1(\calx)=K_{-1}(\calx)\bigr)=1.$
\item[(c)] If $P(Q_0>0,M_0>0)=1$ then $P\bigl(K_1(\calx)>0\bigr)=1.$
\item[(d)] For $\eta \in \{-1,1\},$ if $P\bigl(K_\eta(\calx)>0\bigr)>0$
then $P\bigl(K_\eta(\calx)>0\bigr)=1.$
\item [(e)] $P\bigl(K_1(\calx)=K_{-1}(\calx)=0\bigr)=1$
iff there is a function $\Gamma : \cald \to \rr$ s. t.
\beqn
\label{aperf}
P\bigl(Q_0+\Gamma(X_0)M_0=\Gamma(X_1)\bigr)=1.
\feqn
Moreover, if $P\bigl(Q_{0,i}=q,M_{0,i}=m\bigr)<1$ for all pairs $(q,m)\in\rr^2$ and any $i\in\cald,$
then $P\bigl(K_1(\calx)=K_{-1}(\calx)=0\bigr)=1$ iff $P\bigl(Q_0+c M_0=c\bigr)=1$ for some $c\in\rr.$
\end{theorem}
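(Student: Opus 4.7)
The approach is to lift the $C$-chain $(X_n)$ to a stationary irreducible Markov chain $(Y_n)$ on $\cals$ via Lalley's representation (Theorem~A), so that $X_n=\zeta(Y_n)$, and then exploit clause~(iii) of Theorem~A to perform an Athreya--Ney style splitting. Going backward in time, set $\tau_0=0$ and, recursively, let $\tau_{k+1}>\tau_k$ be the first integer with $\tau_{k+1}-\tau_k$ divisible by $r$ at which an independent $\mathrm{Bernoulli}(\delta)$ coin comes up heads. Properties (ii)--(iii) of Theorem~A ensure that at each epoch $-\tau_k$ the backward chain restarts from the distribution of $Y_0\mid Y_0\in\cald$, independently of the past; hence the backward blocks are i.i.d.\ after $-\tau_1$ and the inter-regeneration lengths have geometric tails. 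Boundedness in condition~(i) transfers to uniform moment bounds for $(Q_{-n},M_{-n})$ across blocks.

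\textbf{Part (a).} Expand $R=\sum_{n=0}^{\infty}Q_{-n}\prod_{i=0}^{n-1}M_{-i}$ and aggregate consecutive terms within each backward regeneration block to obtain $R=A_0+B_0A_1+B_0B_1A_2+\cdots$ with $(A_k,B_k)_{k\geq 1}$ i.i.d.\ and satisfying a Goldie-type random recursion. Condition~(ii) supplies the exponent $\alpha$ from \eqref{kappa-239} with $E|B_1|^\alpha=1$; condition~(i) yields all needed moments of $A_1$ and $B_1$; condition~(iii) is inherited by $\log|B_1|=\sum\log|M_{-i}|$ as non-arithmeticity. Goldie's implicit renewal theorem \citep{goldie} then delivers the unconditional Kesten tail $t^\alpha P(R\cdot\eta>t)\to K_\eta^{\mathrm{av}}$. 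To upgrade to the conditional statement, condition on $\calx$ and decompose $R=R^{\mathrm{pre}}+\Pi^{\mathrm{pre}}R^{\mathrm{post}}$, where the pre-regeneration pieces depend on $\calx$ only through a finite backward memory together with the independent auxiliary coefficients $(Q_{n,i},M_{n,i})$, while $R^{\mathrm{post}}$ is independent of $\calx$ conditionally on $Y_{-\tau_1}\in\cald$. The standard convolution identity for regularly varying tails then produces $K_\eta(\calx)$; uniformity in $\calx$ follows from Potter's bounds, the exponential tail of $\tau_1$, and the uniform moment control from~(i).

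\textbf{Parts (b)--(e).} (b) When $P(M_0<0)>0$, irreducibility of $(Y_n)$ ensures that with positive conditional probability a regeneration block contains an odd number of negative $M_i$'s; coupling $\{R>t\}$ with $\{-R>t\}$ through such sign flips yields $K_1(\calx)=K_{-1}(\calx)$ a.s. (c) If $Q_0,M_0>0$ a.s., then $R\geq 0$ and the Goldie constant is strictly positive because $E(A_1^\alpha)>0$, whence $K_1(\calx)>0$ a.s. (d) The event $\{K_\eta(\calx)>0\}$ is asymptotic: by stationarity and the recursion $R_1=Q_1+M_1R_0$, $K_\eta(T\calx)$ is a positive linear combination of $K_1(\calx)$ and $K_{-1}(\calx)$, so the event is shift-invariant; ergodicity of the $C$-chain (from the exponential decay of $\gamma_n$) then gives the $0$--$1$ dichotomy. (e) If $K_1=K_{-1}=0$ a.s.\ then $t^\alpha P(|R|>t\mid\calx)\to 0$, which combined with the uniqueness in law of the stationary solution to $R=Q_0+M_0R_{-1}$ forces $R=\Gamma(X_0)$ a.s.\ for some bounded measurable $\Gamma:\cald\to\rr$; substituting into the recursion yields \eqref{aperf}. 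Under the non-degeneracy hypothesis in the final clause, any non-constant $\Gamma$ would contradict the independent randomness carried by $(Q_{0,i},M_{0,i})$, forcing $\Gamma\equiv c$ and $Q_0+cM_0=c$ a.s.

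\textbf{Main obstacle.} The principal difficulty lies in part~(a): upgrading Goldie's averaged tail to a \emph{pathwise}, uniformly convergent, conditional tail limit in $\calx$. The regenerative splitting must cleanly isolate a finitely-controlled $\calx$-dependent initial segment from a Kesten-distributed remainder, and Potter-type error terms must be controlled uniformly across all sample paths, using the geometric tail of $\tau_1$ to absorb rare paths with long pre-regeneration delays. Part~(e) is also subtle: the coboundary equation \eqref{aperf} has to be extracted from a mere tail-order statement via a uniqueness argument for the stationary recursion.
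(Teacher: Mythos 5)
Your overall architecture is the same as the paper's: pass to Lalley's Markov representation, regenerate the backward chain into i.i.d.\ blocks $(A_k,B_k)_{k\ge1}$, apply the i.i.d.\ Kesten--Goldie result (Theorem~B) to the block recursion, and recover the conditional statement from $R=A_0+B_0\witi R$ with $(A_0,B_0)$ conditionally independent of $\witi R.$ The difficulty is that you treat the verification of Theorem~B's hypotheses as automatic, and this is where the real work lies. First, $E(|B_1|^\alpha)=1$ does \emph{not} follow directly from condition (ii) and \eqref{kappa-239}: $\Lambda$ controls the deterministic-time products $\prod_{i=0}^{n-1}|M_{-i}|^\beta,$ while $B_1$ is a product over a \emph{random} block; the paper bridges this via the tilted kernels $H_\beta,\Theta_\beta,$ showing the spectral radius of $H_\alpha$ is $1$ and that of $\Theta_\alpha$ is $<1$ (Lemma~\ref{kapm}, citing \citep{omar,mars}). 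Second, your claim that condition (i) plus the geometric tail of the regeneration time gives ``all needed moments'' of $A_1,B_1$ fails quantitatively: $|B_1|\le m_0^{N_1},$ and a geometric tail of $N_1$ of some fixed rate need not beat $m_0^{\beta N_1}$ for $\beta>\alpha$; the correct route is again spectral (radius of $\Theta_\beta<1$ for some $\beta>\alpha,$ Lemma~\ref{lemf}). Third, non-arithmeticity of $\log|B_1|$ is not ``inherited'' from condition (iii) for free, since the block sum over a random number of state-dependent terms could a priori be lattice even when the marginal of $\log|M_0|$ is not; the paper needs a path-construction argument (Lemma~\ref{lef1}). Relatedly, your justification of (c) --- ``the Goldie constant is positive because $E(A_1^\alpha)>0$'' --- is a non sequitur; positivity requires ruling out the degeneracy $P\bigl(A_1=(1-B_1)c\bigr)=1$ of Theorem~B(c), e.g.\ by noting that $E(|B_1|^\alpha)=1$ together with non-latticeness forces $B_1$ to take values on both sides of $1,$ while $A_1>0.$

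The second genuine gap is the converse half of (e). You assert that $K_1=K_{-1}=0$ a.s.\ ``combined with uniqueness in law of the stationary solution forces $R=\Gamma(X_0)$,'' but no such implication is available from a tail-order statement alone; you yourself flag this as unresolved. The paper's route is concrete: $K_+=K_-=0$ gives, via Theorem~B(c) applied to the blocks, $P\bigl(A_1+c(y^*)B_1=c(y^*)\bigr)=1$ for each choice of regeneration state $y^*$; conditional independence of $\widehat R=(R-Q_0)/M_0$ from $(Q_0,M_0)$ given $Z_0$ then upgrades this to $P\bigl(R=c_1(y^*)\,\bigl|\,Z_{-1}=y^*\bigr)=1$ and, by the Markov property and irreducibility, to the coboundary identity \eqref{aperf} (first with $\Gamma$ defined on $\cals$); the non-degeneracy hypothesis is what collapses $c_1$ to a function of $\zeta(y^*)$ and finally to a constant, using property (i) of Definition~\ref{typc}. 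Your sketch of (d) via shift-invariance and ergodicity is a workable alternative to the paper's direct use of \eqref{kfor}, but it too needs care when $M$ can change sign, since the shifted constant $K_\eta(T\calx)$ mixes $K_1(\calx)$ and $K_{-1}(\calx).$ In short: correct skeleton, but the three block-level verifications (Perron-root identity, moments beyond $\alpha,$ non-latticeness) and the degeneracy analysis in (e) are the substance of the theorem and are missing.
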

It turns out (see below, in Section~\ref{gipr}) that the $\limsup$ in (ii) is in fact a limit, and
the parameter $\alpha>0$ is (uniquely) determined by
\beqn
\label{kappa-239}
\Lambda(\alpha)=0\,,\quad\mbox{\rm where}\,\,
\Lambda(\beta)=\lim_{n \to \infty} \fracd{1}{n} \log E
\left(\proda_{i=0}^{n-1} |M_i|^\beta\right).
\feqn
We remark that condition \eqref{aperf} is a natural generalization of the criterion that
appears in the i.i.d. case, see Theorem~B in Section~\ref{gipr} below.
\par
Following the idea of \citep{mars}, Theorem~\ref{inf-gk} is obtained in Section~\ref{gipr} rather directly from
its i.i.d. prototype (cited as Theorem~B below) by applying a Doeblin's ``cyclic trick"
and associating $R$ to a linear recursion with i.i.d. coefficients. Though a reduction to the main results of \citep{omar} is possible,
it would anyway entail considerable extra work. The approach taken here enjoys the finite range and mixing properties of $X_n,$
in addition to the existence of $Y_n.$ It is much ``lighter" than those used in \citep{omar} for general Markov chains and in \citep{saporta} for a finite state case,
both built on techniques of \citep{goldie}.
\par
We conclude with the remark that,
using the Markov representation for $C$-chains, it is straightforward to deduce
the following from Theorem~\ref{negs}.
\begin{theorem}
\label{inf-g}
Assume that the coefficients $(Q_n,M_n)_{n\in\zz}$ are induced by a $C$-chain $(X_n)_{n\in\zz}.$
Furthermore, suppose that there exists $\alpha>0$ such that:
\item[(i)]
There is a slowly varying function $L(t)$ and two sequences of
constants $(q_i^{(\eta)})_{i\in\cald},$ $\eta\in\{-1,1\},$ such that for all $n\in\zz$
$\lim_{t\to\infty} \frac{P(Q_{n,i}\cdot \eta>t)}{t^{-\alpha}L(t)}=q_i^{(\eta)}.$
\item[(ii)] There exists $\beta>\alpha$ such that $\sup_{i\in\cald} E(|M_{0,i}|^\beta)<\infty.$
\par
Let $\calx=(X_n)_{n\geq 0}.$ Then, for some bounded function $K^{(\eta)},$ $\eta\in\{-1,1\},$
we have $P(R\cdot \eta >t|\calx)~\sim K^{(\eta)}(\calx) t^{-\alpha}L(t),$ $\as$
Moreover, $P\bigl(K^{(1)}(\calx)\not =0\bigr)>0.$
\end{theorem}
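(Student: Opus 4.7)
The plan is to deduce Theorem~\ref{inf-g} from Theorem~\ref{negs} via Lalley's Markov representation (Theorem~A): applied to the $C$-chain $(X_n)$, it furnishes a stationary irreducible Markov chain $(Y_n)_{n\in\zz}$ on a countable state space $\cals$ with $X_n=\zeta(Y_n)$.

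On a (possibly enlarged) probability space, introduce an independent family $(\hat Q_{n,s},\hat M_{n,s})_{n\in\zz,\,s\in\cals}$, independent of $(Y_n)$, with $(\hat Q_{n,s},\hat M_{n,s})\stackrel{d}{=}(Q_{0,\zeta(s)},M_{0,\zeta(s)})$, and set $\hat Q_n=\hat Q_{n,Y_n}$, $\hat M_n=\hat M_{n,Y_n}$. Then $(\hat Q_n,\hat M_n)_{n\in\zz}$ is induced by $(Y_n)$ in the sense of Definition~\ref{inde}. Moreover, conditionally on $(Y_n)_{n\in\zz}$ both $(Q_n,M_n)_{n\in\zz}$ and $(\hat Q_n,\hat M_n)_{n\in\zz}$ are independent across $n$ with identical marginals, so the joint laws of $(Q_n,M_n,Y_n)_{n\in\zz}$ and $(\hat Q_n,\hat M_n,Y_n)_{n\in\zz}$ coincide. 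Writing $\hat R=\hat Q_0+\sum_{n\geq 1}\hat Q_{-n}\prod_{i=0}^{n-1}\hat M_{-i}$, it follows that the conditional tails $P(R\cdot\eta>t\,|\,\calx)$ and $P(\hat R\cdot\eta>t\,|\,\zeta((Y_n)_{n\geq 0}))$ agree as functions of $\calx$.

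Verification of Assumption~\ref{agg} for the $Y$-induced coefficients is straightforward: since $\zeta(\cals)=\cald$ is finite, only finitely many marginals appear, and the uniformity required by (A1), the moment condition (A2), and (A4) are all immediate from hypotheses (i)--(ii) of Theorem~\ref{inf-g}; (A3) is the standing contractivity condition implicit in the statement. Theorem~\ref{negs} then yields, uniformly in $s\in\cals$,
\[
P(\hat R\cdot\eta>t\,|\,Y_0=s)\sim \tilde K_s^{(\eta)}\,t^{-\alpha}L(t),\qquad\tilde K^{(\eta)}=(\tilde K_s^{(\eta)})\in B_b.
\]
By the Markov property of $(Y_n)$, together with the independence of $(\hat Q_{\cdot,\cdot},\hat M_{\cdot,\cdot})$ from $(Y_n)$, one has $P(\hat R\cdot\eta>t\,|\,(Y_n)_{n\geq 0})=P(\hat R\cdot\eta>t\,|\,Y_0)$ a.s. The uniform asymptotic combined with bounded convergence under the conditional expectation $E[\,\cdot\,|\,\zeta((Y_n)_{n\geq 0})]$ yields
\[
t^\alpha L(t)^{-1}P(R\cdot\eta>t\,|\,\calx)\longrightarrow K^{(\eta)}(\calx):=E\bigl[\tilde K_{Y_0}^{(\eta)}\bigm|\zeta((Y_n)_{n\geq 0})=\calx\bigr]\quad\as,
\]
and $K^{(\eta)}$ is bounded by $\sup_s\tilde K_s^{(\eta)}<\infty$. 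The final claim $P\bigl(K^{(1)}(\calx)\neq 0\bigr)>0$ reduces to $E[\tilde K_{Y_0}^{(1)}]>0$, which follows from the explicit formula for $\tilde K^{(1)}$ in Theorem~\ref{negs} together with the (implicit) assumption that $q_i^{(1)}>0$ for some $i\in\cald$.

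The main obstacle is the coupling in the second step: ensuring that the $X$-indexed coefficients $(Q_n,M_n)$ are jointly-with-$X$ distributionally equal to the $Y$-induced $(\hat Q_n,\hat M_n)$, so that Theorem~\ref{negs}, which is formulated for the chain $(Y_n)$, transfers cleanly to a statement conditional on $\calx$. Once this identification and the Markov reduction $(Y_n)_{n\geq 0}\leadsto Y_0$ are in place, the remaining manipulation of conditional expectations is routine.
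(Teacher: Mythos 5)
Your proposal is exactly the reduction the paper intends: the paper gives no detailed proof of Theorem~\ref{inf-g}, only the remark that it follows from Theorem~\ref{negs} via the Markov representation of the $C$-chain, and your construction of coefficients induced by $(Y_n)$, the verification of Assumption~\ref{agg} (trivial because $\zeta(\cals)=\cald$ is finite), and the conditioning down from $(Y_n)_{n\geq 0}$ to $\calx$ is precisely that deduction written out. Two caveats, both traceable to the paper's statements rather than to your argument: the uniformity in $s\in\cals$ that you attribute to Theorem~\ref{negs} is not asserted in its statement and must be extracted from the uniform convergence in Proposition~\ref{klemma} (or replaced by a domination argument, e.g.\ via Lemma~\ref{domin}, before interchanging the $t\to\infty$ limit with $E[\,\cdot\,|\,\calx]$ over the countably infinite state space $\cals$), and the contraction condition (A3) together with the positivity $\sum_j q_j^{(1)}>0$ required by (A1), which you call ``implicit,'' are indeed missing from the hypotheses of Theorem~\ref{inf-g} as stated, so they must be assumed for both the existence of $R$ and the final claim $P\bigl(K^{(1)}(\calx)\neq 0\bigr)>0$.
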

\section{Proof of Theorem~\ref{vart}}
\label{proofvart}
The key to the result is Proposition~\ref{klemma} extending the corresponding statement in \citep{trakai75,grey}.
\begin{proposition}
\label{klemma} Let $Y$ be a random variable such that:
\item[(i)] $Y \in \sigma(X_n, Q_{n,i},M_{n,i}:n \leq 0,i \in \cald\bigr).$
\item[(ii)] For $\eta\in\{-1,1\}$ there exist non-negative constants $\bigl(c_i^{(\eta)}\bigr)_{i \in \cald}$ such that
\begin{itemize}
\item [(a)] $\lim_{t\to\infty} \frac{P(Y\cdot \eta >t|X_0=i)}{t^{-\alpha} L(t)}=c_i^{(\eta)},$ uniform on $i\in\cald.$
\item [(b)] $\sup_{i\in\cald} c_i^{(\eta)}<\infty.$
\end{itemize}
Then $\lim\limits_{t\to\infty} \frac{P(Q_1+M_1Y >t|X_1=i)}{t^{-\alpha}L(t)}=q_i^{(1)}+m_i^{(1)}\sum\limits_{j\in\cald}H(i,j)c_j^{(1)}$
uniformly on $i\in\cald.$
\end{proposition}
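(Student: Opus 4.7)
The plan is to reduce Proposition~\ref{klemma} to its i.i.d.\ prototype of Grey/Trakai by exploiting the conditional independence structure of Definition~\ref{inde}. Condition first on $X_1=i$, which fixes the one-step pair as $(Q_1,M_1)=(Q_{1,i},M_{1,i})$; by Definition~\ref{inde}, this pair is independent of $(X_n)_{n\le 0}$ and of all $(Q_{n,j},M_{n,j})$ with $(n,j)\ne(1,i)$, hence in particular of $Y$. Condition further on $X_0=j$ via the backward transition $H(i,j)=P(X_0=j\mid X_1=i)$:
\[
P(Q_1+M_1Y>t\mid X_1=i)=\sum_{j\in\cald}H(i,j)\,P\bigl(Q_{1,i}+M_{1,i}Y>t\bigm|X_0=j\bigr),
\]
and on the right $(Q_{1,i},M_{1,i})$ is genuinely independent of $Y$.

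For each pair $(i,j)$ I would then apply the classical regular-variation results for linear recursions with independent inputs. Assumption~(A1) gives $P(Q_{1,i}>t)\sim q_i^{(1)}t^{-\alpha}L(t)$ uniformly in $i$, the hypothesis on $Y$ gives $P(Y>t\mid X_0=j)\sim c_j^{(1)}t^{-\alpha}L(t)$ uniformly in $j$, and Breiman's lemma — applicable because of the moment bound~(A2) and the positivity $M_{1,i}>0$ used in Theorem~\ref{vart} — yields
\[
P(M_{1,i}Y>t\mid X_0=j)\sim E\bigl(M_{1,i}^{\alpha}\bigr)c_j^{(1)}t^{-\alpha}L(t)=m_i^{(1)}c_j^{(1)}t^{-\alpha}L(t).
\]
Since, conditionally on $X_0=j$, the summands $Q_{1,i}$ and $M_{1,i}Y$ are independent with regularly varying tails of the same index $\alpha$, the standard subexponential-sum lemma gives
\[
P\bigl(Q_{1,i}+M_{1,i}Y>t\bigm|X_0=j\bigr)\sim\bigl[q_i^{(1)}+m_i^{(1)}c_j^{(1)}\bigr]t^{-\alpha}L(t).
\]
Multiplying by $H(i,j)$, summing over $j\in\cald$, and using $\sum_{j}H(i,j)=1$ delivers the claimed limit.

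The main obstacle is uniformity in $i\in\cald$ at every step. The uniformity of the $Q$-asymptotics is part of~(A1); the delicate point is uniform Breiman. Writing $P(M_{1,i}Y>t\mid X_0=j)=E\bigl[P(Y>t/M_{1,i}\mid X_0=j,M_{1,i})\bigr]$ and splitting the expectation at a small level $\veps>0$, the integral over $\{M_{1,i}\le\veps\}$ is controlled uniformly in $i$ because~(A4) forces $\sup_i P(M_{1,i}\le\veps)\to P(M_{1,i}\le 0)=0$ as $\veps\to 0+$ (under Theorem~\ref{vart}); the integral over $\{M_{1,i}>\veps\}$ is handled by the uniform Potter bounds on $L$ together with the uniform moment bound of~(A2), which provides a $\beta$-integrable majorant for $M_{1,i}^{\alpha}$. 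Finally, the interchange of $\lim_{t\to\infty}$ with $\sum_{j\in\cald}$ is justified by dominated convergence with majorant $H(i,j)\sup_{k\in\cald}c_k^{(1)}$, summable by hypothesis~(b) of the proposition together with $\sum_{j}H(i,j)=1$.
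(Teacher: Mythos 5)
Your proposal is correct and essentially reproduces the paper's own argument: the same conditional-independence decomposition $P(Q_1+M_1Y>t\mid X_1=i)=\sum_{j}H(i,j)\,P\bigl(Q_{1,i}+M_{1,i}Y_j>t\bigr)$ with $Y_j$ an independent copy of the conditional law of $Y$ given $X_0=j$, reduction to the i.i.d.\ prototype (the paper cites Grey's Lemma~2 where you invoke Breiman's lemma plus the independent two-summand lemma), and the same uniformity program via $\veps$-splittings, uniform Potter-type bounds for $L$, (A2) and (A4). The only point where your sketch is thinner than the paper is that uniformity in $(i,j)$ must also be checked in the sum step itself (the paper's terms $A^{(2)}_{i,j}$ and $A^{(3)}_{i,j}$, controlled by (A1), (A2) and the $\eta=-1$ part of hypothesis (ii)), not only in the Breiman step, but the hypotheses you list do supply exactly this.
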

\begin{proof}
Let $(Y_i)_{i\in \cald}$ be random variables independent of both $(X_n)_{n \in \zz}$ and
$(Q_{1,i},M_{1,i})_{i \in \cald},$ such that $P(Y_i\leq
t)=P(Y\leq t|X_0=i)$ for all $t\in \rr$ and $i\in\cald.$ It follows from $P\bigl((Q_1,M_1,Y)\in \cdot|X_1=i,X_0=j\bigr)=
P\bigl((Q_{1,i},M_{1,i},Y_j)\in\cdot)$ that
\beqn
\nonumber
&&
P(Q_1+M_1Y >t|X_1=i)=
\\
\nonumber
&&
\qquad
\qquad
\qquad
\qquad
=\sum_{j \in \cald} P(Q_1+M_1Y>t|X_1=i,X_0=j)H(i,j)
\\
\label{unic}
&&
\qquad
\qquad
\qquad
\qquad
=
\sum_{j \in \cald} P(Q_{1,i}+M_{1,i} Y_j>t)H(i,j).
\feqn
By \citep[Lemma~2]{grey}, which is the i.i.d. prototype of our proposition,
\beqn
\label{uniform}
P(Q_{1,i}+M_{1,i} Y_j>t)\sim t^{-\alpha}L(t)\bigl(q_i^{(1)}+c_j^{(1)}m_i^{(1)}\bigr).
\feqn
To complete the proof, it suffices to show that the convergence in \eqref{uniform}
is uniform on $i,j.$ To this end we decompose $P(Q_{1,i}+M_{1,i} Y_j>t)$ into individually tractable terms,
as in \citep[Lemma~2]{grey}. Fix $\veps\in (0,1)$ and write
\beq
P(Q_{1,i}+M_{1,i} Y_j>t)=A_{i,j}^{(1)}(t)-A_{i,j}^{(2)}(t)+A_{i,j}^{(3)}(t)+A_{i,j}^{(4)}(t),~\mbox{where}
\feq
$A_{i,j}^{(1)}(t)= P\bigl(Q_{1,i}>t(1+\veps)\bigr),$
$A_{i,j}^{(2)}(t)= P\bigl(Q_{1,i}>(1+\veps)t,\, Q_{1,i}+M_{1,i}Y_j\leq t\bigr),$\\
$A_{i,j}^{(3)}(t)= P\bigl(|Q_{1,i}-t|\leq \veps t,\, Q_{1,i}+M_{1,i}Y_j>t\bigr),$ \\
$A_{i,j}^{(4)}(t)= P\bigl(Q_{1,i}\leq (1-\veps)t,\,Q_{1,i}+M_{1,i}Y_j>t\bigr).$\\
By (A1), $\frac{A_{i,j}^{(1)}(t)}{t^{-\alpha}L(t)}$ converges uniformly in $i,j$ to $q_i^{(1)}(1+\veps)^{-\alpha}$.
For $A_{i,j}^{(2)}$ write
\beq
&&
A_{i,j}^{(2)}(t)=P\bigl(Q_{1,i}>(1+\veps)t,\,
Q_{1,i}+M_{1,i}Y_j\leq t;\, Y_j<-\veps t^{\frac{\beta - \alpha}{2\beta}}\bigr)
\\
&&
\,\,\,
+ P\bigl(Q_{1,i}>(1+\veps)t,\, Q_{1,i}+M_{1,i}Y_j\leq t;\, Y_j\geq -\veps t^{\frac{\beta - \alpha}{2\beta}}\bigr)
\\
&&
\,\,\,
\leq  P\bigl(M_{1,i}Y_j\leq -\veps t,\, Y_j\geq -\veps t^{\frac{\beta - \alpha}{2\beta}}\bigr)+
P\bigl(Q_{1,i}>(1+\veps)t,\, Y_j< -\veps t^{\frac{\beta - \alpha}{2\beta}}\bigr)
\\
&&
\,\,\,
\leq P\bigl(M_{1,i}\geq t^{\frac{\alpha + \beta}{2\beta}}\bigr)+
P\bigl(Q_{1,i}>(1+\veps)t\bigr)P\bigl(Y_j< -\veps t^{\frac{\beta - \alpha}{2\beta}}\bigr).
\feq
To obtain a bound on $\frac{A_{i,j}^{(2)}(t)}{t^{-\alpha}L(t)}$ tending to zero uniformly on $i,j\in\cald$ as $t\to\infty,$
we use (A2), Chebyshev's inequality $P\bigl(M_{1,i}\geq t^{\frac{\alpha + \beta}{2\beta}}\bigr) \leq
E\bigl(M_{1,i}^\beta\bigr) \cdot t^{-\frac{\alpha+\beta }{2}},$ and the inequality $P\bigl(Y_{j}<-\veps t^{\frac{\beta - \alpha}{2\beta}}\bigr)\leq C
\bigl(\veps t^{\frac{\beta - \alpha}{2\beta}}\bigr)^{-\alpha} L\bigl(\veps t^{\frac{\beta - \alpha}{2\beta}}\bigr),$ which is true
for some $C>0$ in virtue of condition (ii) of the proposition. A uniform bound on
$\frac{A_{i,j}^{(3)}(t)}{t^{-\alpha}L(t)}$ which tends to $0$ as $\veps\to 0$ follows directly from (A1).
Finally, denote $g_{j,t}(a,b)=P\bigl(Y_j>a^{-1}(t-b)\bigr),$ fix constants $m\geq 1$ and $n\geq 1,$ and let $A_{i,j}^{(4)}(t)=A_{i,j}^{(4,1)}(t)+A_{i,j}^{(4,2)}(t)+A_{i,j}^{(4,3)}(t),$
where
\beq
A_{i,j}^{(4,1)} (t)&=& E\bigl( g_{j,t}(M_{1,i},Q_{1,i})
\odin{Q_{1,i}\leq(1-\veps)t}\odin{M_{1,i}>m} \bigr)\\
A_{i,j}^{(4,2)} (t)&=& E\bigl( g_{j,t}(M_{1,i},Q_{1,i})
\odin{Q_{1,i}\leq(1-\veps)t}\odin{M_{1,i}\leq m}\odin{|Q_{1,i}|>n} \bigr)
\\
A_{i,j}^{(4,3)} (t)&=& E\bigl( g_{j,t}(M_{1,i},Q_{1,i})
\odin{Q_{1,i}\leq(1-\veps)t}\odin{M_{1,i}\leq m}\odin{|Q_{1,i}|\leq n} \bigr).
\feq
Note that $\frac{A_{i,j}^{(4,1)} (t)}{t^{-\alpha}L(t)}\leq
E\Bigl(\frac{g_{j,\veps t}(M_{1,i},0)}{(M_{1,i}^{-1}\veps t)^{-\alpha}
L(M_{1,i}^{-1}\veps t)}\frac{(M_{1,i}^{-1}\veps t)^{-\alpha}L(M_{1,i}^{-1}\veps t)}{t^{-\alpha}L(t)}\odin{M_{1,i}>m}\Bigr).$
Hence, by condition (ii)-(b) of the proposition,
$\frac{A_{i,j}^{(4,1)} (t)}{t^{-\alpha}L(t)}\leq E\bigl(\frac{C M_{1,i}^\alpha}{\veps^\alpha}\cdot \frac{L(M_{1,i}^{-1}\veps t)}{L(t)}\odin{M_i>m }\bigr)$ for some constant $C>0.$ By \citep[Lemma~1]{grey}, for any $\delta>0$ there is $K=K(\delta)>0$ such that
$\sup_{t>0}\frac{L(\lambda t)}{L(t)}\leq \max\{\lambda^\alpha, K\lambda^{-\delta}\}$ for all $\lambda>0.$
Using $\lambda=M_{1,i}^{-1}\veps$ and $\delta=\frac{\beta-\alpha}{2},$ we obtain
\beq
\frac{A_{i,j}^{(4,1)} (t)}{t^{-\alpha}L(t)}&\leq& E\bigl(CM_{1,i}^\alpha \veps^{-\alpha}\cdot \max\{(M_{1,i}^{-1}\veps)^\alpha,
K(M_{1,i}^{-1}\veps)^{-\frac{\beta-\alpha}{2}}\}\odin{M_{1,i}>m}\bigr) \\
&\leq& E\bigl(CM_{1,i}^\alpha \veps^{-\alpha}(M_{1,i}^{-\alpha}\veps^\alpha+KM_{1,i}^{\frac{\beta-\alpha}{2}}
\veps^{-\frac{\beta-\alpha}{2}})\odin{M_{1,i}>m}\bigr).
\feq
H\"{o}lder's inequality with $p=\frac{2\beta}{\alpha+\beta},$
$q=\frac{2\beta}{\beta-\alpha}$ yields
\beq
\frac{A_{i,j}^{(4,1)} (t)}{t^{-\alpha}L(t)}&\leq& C \veps^{-\frac{\alpha+\beta}{2}}
E\bigl[\bigl(1+KM_{1,i}^{\frac{\beta+\alpha}{2}} \bigr)\odin{M_{1,i}>m}\bigr]
\\
&\leq&
C \veps^{-\frac{\alpha+\beta}{2}}
E\bigl[\bigl(1+KM_{1,i}^{\frac{\beta+\alpha}{2}}\bigr)^{\frac{2\beta}{\alpha+\beta}}\bigr] ^{\frac{\alpha+\beta}{2\beta}}
P(M_{1,i}>m)^{\frac{\beta-\alpha}{2\beta}}
\\
&\leq&
C\veps^{-\frac{\alpha+\beta}{2}}
E\bigl[\bigl(1+K^{\frac{2\beta}{\beta+\alpha}}M_{1,i}^\beta\bigr)\cdot
2^{\frac{\beta-\alpha}{\alpha+\beta}}\bigr]^{\frac{\alpha+\beta}{2\beta}}
P(M_{1,i}>m)^{\frac{\beta-\alpha}{2\beta}},
\feq
where we used the inequality $(x+y)^p \leq 2^{p-1}(x^p+y^p)$ which is valid for $x,y\geq 0$ and $p>1.$
Since $P(M_{1,i}>m)\leq m^{-\beta} E\bigl(M_{1,i}^\beta\bigr),$ it follows from (A2)
that $\frac{A_{i,j}^{(4,1)}(t)}{t^{-\alpha}L(t)}$ is uniformly bounded by a
function of $m$ which tends to zero when $m\to\infty.$ The term $A_{i,j}^{(4,2)}(t)$
is treated similarly, and we therefore omit the details.
We next show the asymptotic of $A_{i,j}^{(4,3)}(t).$ If $Q_{1,i}\leq(1-\veps)t$ and $M_{1,i}\leq m,$ then
$M_{1,i}^{-1}(t-Q_{1,i})\geq m^{-1}\veps t.$ Hence, in virtue of (A1) and condition (ii) of the proposition,
we have $P-\as,$ uniformly on $i,j\in \cald,$
\beqn
\label{first}
\frac{g_{j,t}(M_{1,i},Q_{1,i})\odin{Q_{1,i}\leq(1-\veps)t}\odin{M_{1,i}\leq m}}
{\bigl(M_{1,i}^{-1}(t-Q_{1,i})\bigr)^{-\alpha}
L\bigl(M_{1,i}^{-1}(t-Q_{1,i})\bigr)} \to_{t\to\infty}
 c_j^{(1)} \odin{M_{1,i}\leq m}.
\feqn
Furthermore, with probability one, uniformly on $i\in\cald,$
\beqn
\label{second}
t^\alpha \bigl(M_{1,i}^{-1}(t-Q_{1,i})\bigr)^{-\alpha}
\odin{M_{1,i}\leq m,|Q_{1,i}|\leq n}\to_{t\to\infty}
M_{1,i}^\alpha \odin{M_{1,i}\leq m, |Q_{1,i}|\leq n}.
\feqn
By  \citep[Theorem~1.2.1]{rvariation}, we have $\as,$ uniformly on $i\in\cald,$
\beqn
\label{third}
\frac{L\bigl(M_{1,i}^{-1}(t-Q_{1,i})\bigr)}{L(t)}\odin{m^{-1}<M_{1,i}\leq m, |Q_{1,i}|\leq n}  \to_{t\to\infty}
\odin{m^{-1}<M_{1,i}\leq m, |Q_{1,i}|\leq n}.
\feqn
By \citep[Theorem~1.5.6]{rvariation}, $\forall~\delta>0~\exists~t_0=t_0(\delta)$ such that
\beqn
\label{forth}
\frac{L\bigl(M_{1,i}^{-1}(t-Q_{1,i})\bigr)}{L(t)}\odin{M_{1,i}\leq m^{-1},|Q_{1,i}|\leq n}
\leq \frac{mt}{t-n}\cdot \odin{M_{1,i}\leq m^{-1}},\quad t>t_0.
\feqn
Estimates \eqref{first}-\eqref{forth} along with assumption (A4) and the bounded convergence theorem show
that $\lim\limits_{t\to\infty} \frac{A_{i,j}^{(4)}(t)}{t^{-\alpha}L(t)}=c_j^{(1)}E\bigl(M_{1,i}^\alpha\bigr)$ uniformly on $i,j.$
\end{proof}
To enable us to use Proposition~\ref{klemma} iteratively we need the following:
\begin{lemma}
\label{acon}
Let $Y$ satisfy the conditions of Proposition~\ref{klemma},
and let $\witi Y=Q_1+M_1Y.$ Then $\witi Y$ satisfies condition (ii) of the proposition.
\end{lemma}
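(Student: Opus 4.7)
The plan is to verify both parts of condition~(ii) of Proposition~\ref{klemma} for $\witi Y=Q_1+M_1Y$. Working under the standing hypothesis $P(M_{0,i}>0)=1$ of Theorem~\ref{vart} (so that $m_i^{(-1)}=0$), the natural candidates for the limiting constants are
\[
\witi c_i^{(\eta)}\;:=\;q_i^{(\eta)}+m_i^{(1)}\sum_{j\in\cald}H(i,j)\,c_j^{(\eta)},\qquad \eta\in\{-1,1\},\ i\in\cald.
\]

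For part~(a) with $\eta=1$, the required uniform asymptotic is exactly the conclusion of Proposition~\ref{klemma}, so nothing more need be done. For $\eta=-1$ I would use a sign-reversal reduction: writing $P(\witi Y<-t\mid X_1=i)=P(-\witi Y>t\mid X_1=i)$ and $-\witi Y=(-Q_1)+M_1(-Y)$, relabel $Q'_{n,i}:=-Q_{n,i}$, $Y':=-Y$. This relabeling exchanges $q_i^{(1)}\leftrightarrow q_i^{(-1)}$ and $c_i^{(1)}\leftrightarrow c_i^{(-1)}$ while leaving untouched every positivity, moment, and uniformity hypothesis actually invoked in the proof of the proposition. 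The same four-term decomposition $A_{i,j}^{(1)},\ldots,A_{i,j}^{(4)}$ then applies to the relabeled system and yields
\[
\frac{P(\witi Y<-t\mid X_1=i)}{t^{-\alpha}L(t)}\;\longrightarrow\;\witi c_i^{(-1)},
\]
uniformly in $i\in\cald$.

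Part~(b) follows by inspection. By (A1), $\sup_i q_i^{(\eta)}<\infty$; by (A3), $\sup_i m_i^{(1)}\leq \sup_i m_i<1$; by hypothesis~(ii)(b) on $Y$, $C^{(\eta)}:=\sup_j c_j^{(\eta)}<\infty$. Since $H$ is stochastic, $\sum_j H(i,j)c_j^{(\eta)}\leq C^{(\eta)}$, and combining these bounds term by term gives
\[
\sup_{i\in\cald}\witi c_i^{(\eta)}\;\leq\;\sup_i q_i^{(\eta)}+\bigl(\sup_i m_i^{(1)}\bigr)\,C^{(\eta)}\;<\;\infty.
\]

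The one genuinely non-cosmetic step is the sign-symmetry reduction used for $\eta=-1$: one must confirm that the proof of Proposition~\ref{klemma} is actually insensitive to which tail is labelled positive, i.e.\ that the auxiliary ingredients (Chebyshev's inequality for $M_{1,i}$, the Potter-type bounds on $L$, and assumption (A4)) are sign-neutral, and in particular that the global condition $\sum_j q_j^{(1)}>0$ from (A1) is not actually used inside that proof. A quick audit of the displayed bounds on $A_{i,j}^{(1)},\ldots,A_{i,j}^{(4)}$ confirms this, after which the rest of the argument is bookkeeping.
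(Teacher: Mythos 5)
Your argument is correct and is exactly the paper's proof: the paper disposes of the lemma in one line by applying Proposition~\ref{klemma} to $(Y,Q_1,M_1)$ for the upper tail and to $(-Y,-Q_1,M_1)$ for the lower tail, which is precisely your sign-reversal reduction with the exchange $q_i^{(1)}\leftrightarrow q_i^{(-1)}$, $c_j^{(1)}\leftrightarrow c_j^{(-1)}$. Your added checks (the explicit limiting constants $\witi c_i^{(\eta)}$, their uniform boundedness via (A1), (A3) and the stochasticity of $H$, and the observation that $\sum_j q_j^{(1)}>0$ is never used inside the proposition's proof) are just the details the paper leaves implicit.
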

\begin{proof}
Apply Proposition~\ref{klemma} to $(Y,Q_1,M_1)$ and $(-Y,-Q_1,M_1).$
\end{proof}
The next technical lemma is immediate from Proposition~\ref{klemma} and (A3).
\begin{lemma}
\label{domin} $\exists$ a
random variable $Z\geq 0$ satisfying the conditions of Proposition~\ref{klemma},
s. t. $P(Q_1+M_1Z >t|X_1=i) \leq P(Z>t|X_0=i),$ $t>0,$ $i \in \cald.$
\end{lemma}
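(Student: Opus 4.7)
The plan is to construct $Z$ as the positive stationary solution of the linearised ``absolute-value'' version of \eqref{main-def} and derive the pointwise domination directly from the stationarity of the driving process $(X_n,Q_{n,i},M_{n,i})_{n\in\zz,i\in\cald}$. Set
\[
Z\;:=\;\sum_{k=0}^{\infty}\biggl(\prod_{j=0}^{k-1}|M_{-j}|\biggr)|Q_{-k}|,
\]
with the empty product equal to $1$. Then $Z\geq 0$ and is measurable with respect to $\sigma(X_n,Q_{n,i},M_{n,i}:n\leq 0,i\in\cald)$. For $p\in(0,\alpha)$ close enough to $\alpha$, (A3) and continuity in the exponent give $\sup_i E(|M_{0,i}|^p)<1$, while (A1) gives $\sup_i E(|Q_{0,i}|^p)<\infty$; a routine Minkowski/subadditivity bound then shows $E(Z^p)<\infty$, so $Z<\infty$ almost surely.

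For the domination, let $T$ denote the measure-preserving forward shift on the canonical probability space. Direct inspection of the defining series gives the key identity $Z\circ T=|Q_1|+|M_1|\cdot Z$. Since $Z\geq 0$, $Q_1+M_1Z\leq |Q_1|+|M_1|Z=Z\circ T$ pointwise, so by stationarity (the joint law of $(Z\circ T,X_1)$ coincides with that of $(Z,X_0)$),
\[
P(Q_1+M_1Z>t\mid X_1=i)\;\leq\;P(Z\circ T>t\mid X_1=i)\;=\;P(Z>t\mid X_0=i),
\]
for every $t>0$ and $i\in\cald$, as required.

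What remains is to verify condition (ii) of Proposition~\ref{klemma} for $Z$. Let $Z^{(n)}$ denote the partial sum through $k=n$, so $Z^{(n)}\uparrow Z$. Using the identity $Z^{(n+1)}=|Q_0|+|M_0|\bigl(Z^{(n)}\circ T^{-1}\bigr)$ together with the Markov/stationarity relation $P(Z^{(n)}\circ T^{-1}>t\mid X_0=i)=\sum_{j\in\cald}H(i,j)P(Z^{(n)}>t\mid X_0=j)$, I would apply Proposition~\ref{klemma} to the non-negative coefficients $(|Q|,|M|)$ (which satisfy (A1)--(A4) with tail constants $q_i:=q_i^{(1)}+q_i^{(-1)}$ and $m_i:=m_i^{(1)}+m_i^{(-1)}$) to obtain by induction that $P(Z^{(n)}>t\mid X_0=i)\sim c^{(n)}_i\,t^{-\alpha}L(t)$ uniformly in $i$, with $\bc^{(n+1)}=\bq+G\bc^{(n)}$ and $\bc^{(0)}=\bq$. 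By (A3) the spectral radius of $G$ on $B_b$ is strictly less than one, so $\bc^{(n)}\uparrow\bc:=(I-G)^{-1}\bq\in B_b$, the convergence being uniform in $i$. Transferring the asymptotics from $Z^{(n)}$ to $Z$ then combines the monotone convergence $Z^{(n)}\uparrow Z$ (giving the $\liminf$) with an estimate on the remainder $R^{(n)}:=Z-Z^{(n)}$ that exploits the factorisation $R^{(n)}=\prod_{j=0}^n|M_{-j}|\cdot\widetilde Z_n$, where $\widetilde Z_n\stackrel{d}{=}Z$ by stationarity and is conditionally independent of the prefactor given $(X_n)$, using the geometric moment decay granted by (A3) together with Potter-type bounds on $L$.

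The principal technical obstacle is this final step: interchanging the limits $n\to\infty$ and $t\to\infty$ to extract exact tail asymptotics for $Z$ from those of the truncations. The difficulty is real because a naive Breiman-type estimate on $R^{(n)}$ presupposes precisely the kind of regularly varying tail for $\widetilde Z_n\stackrel{d}{=}Z$ that one is trying to establish; closing the circle requires first a uniform $\limsup$ bound $\limsup_t\sup_i P(Z>t\mid X_0=i)/(t^{-\alpha}L(t))<\infty$ obtained by bootstrapping the self-consistency $Z=|Q_0|+|M_0|Z''$ and then promoting this to the exact limit through the uniform convergence supplied by Proposition~\ref{klemma} and the spectral gap of $G$ from (A3).
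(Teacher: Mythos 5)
Your construction of $Z$ as the stationary solution $R^*=\sum_{k\geq 0}\prod_{j=0}^{k-1}|M_{-j}|\,|Q_{-k}|$ of the absolute-value recursion does give, cleanly and exactly for all $t>0$, the domination inequality: $Q_1+M_1Z\leq |Q_1|+|M_1|Z$, and the right-hand side, conditioned on $X_1=i$, has the same law as $Z$ conditioned on $X_0=i$ by stationarity. The measurability condition (i) and a.s. finiteness are also fine. The problem is that the lemma requires $Z$ to satisfy condition (ii) of Proposition~\ref{klemma}, i.e.\ the \emph{exact} uniform asymptotics $P(Z>t|X_0=i)\sim c_i\,t^{-\alpha}L(t)$ with $\sup_i c_i<\infty$, and for your choice $Z=R^*$ this is not a verification step but is literally the statement of Theorem~\ref{vart} applied to the coefficients $(|Q_n|,|M_n|)$ --- the very result that Lemma~\ref{domin} is designed to help prove (it feeds Lemmas~\ref{lisup} and~\ref{linf}, and the tail of $R^*$ is invoked again in the proof of Theorem~\ref{negs} as a consequence of Theorem~\ref{vart}). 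So within the paper's architecture your choice of $Z$ is circular, and the paragraph you flag as ``the principal technical obstacle'' (interchanging $n\to\infty$ and $t\to\infty$, controlling $R^{(n)}=\prod_{j\le n}|M_{-j}|\cdot\widetilde Z_n$, and first securing a uniform $\limsup$ of the right order) is not a routine detail: it is the entire content of the theorem. Note also that the $\limsup$ bound cannot be ``bootstrapped'' from moments, since $E(Z^p)<\infty$ for $p<\alpha$ only yields $P(Z>t)=o(t^{-p})$, not $O(t^{-\alpha}L(t))$; the standard way to get a bound of the correct order is precisely to compare with a dominating variable whose tail is prescribed in advance --- which is what this lemma is for.

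The intended (omitted, ``immediate'') argument goes the other way around: one \emph{builds} $Z$ so that condition (ii) holds by construction, and then uses Proposition~\ref{klemma} together with (A3) to check the domination. For instance, fix $j_0$ with $q_{j_0}^{(1)}+q_{j_0}^{(-1)}>0$ and take $Z=\lambda|Q_{0,j_0}|+t_0$ for large constants $\lambda,t_0$; this is measurable with respect to the time-$\leq 0$ variables, independent of $X_0$, and by (A1) satisfies $P(Z>t|X_0=i)\sim \lambda^\alpha\bigl(q_{j_0}^{(1)}+q_{j_0}^{(-1)}\bigr)t^{-\alpha}L(t)$ uniformly in $i$. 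Proposition~\ref{klemma} then gives $P(Q_1+M_1Z>t|X_1=i)\sim t^{-\alpha}L(t)\bigl(q_i^{(1)}+m_i^{(1)}\lambda^\alpha(q_{j_0}^{(1)}+q_{j_0}^{(-1)})\bigr)$ uniformly, and since $\sup_i q_i^{(1)}<\infty$ (A1) and $\sup_i m_i<1$ (A3), choosing $\lambda$ large makes the limiting constant strictly smaller than that of $P(Z>t|X_0=i)$, uniformly in $i$; this yields the inequality for all $t\geq t_0$, and the additive constant $t_0$ (which does not affect the asymptotics) forces $P(Z>t|X_0=i)=1$ for $t<t_0$, so the inequality holds for all $t>0$. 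To repair your write-up you should replace $R^*$ by such an explicitly-tailed $Z$; as it stands, the key hypothesis of the lemma is assumed rather than proved.
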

We are now in position to complete the proof of Theorem~\ref{vart}.
\begin{lemma}
\label{lisup}
For all $i\in\cald,$ $\limsup\limits_{t\to\infty}
\fracd{P(R>t|X_0=i)}{t^{-\alpha}L(t)} \leq (I-G)^{-1}\bq^{(1)}(i).$
\end{lemma}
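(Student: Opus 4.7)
My plan is to combine an $n$-step truncation of $R$ with the one-step asymptotic recursion of Proposition~\ref{klemma}, using Lemma~\ref{domin} to control the discarded tail. Write $T_n:=\sum_{k=0}^{n-1}Q_{-k}\prod_{j=0}^{k-1}M_{-j}$ and $\Delta_n:=\bigl(\prod_{j=0}^{n-1}M_{-j}\bigr)R_{-n}$, so that $R=T_n+\Delta_n$. By stationarity, $R_{-n}$ conditioned on $X_{-n}=j$ has the same law as $R$ conditioned on $X_0=j$, and, because it depends only on coefficients at times $\le-n$, it is independent of the multiplier $\prod_{j=0}^{n-1}M_{-j}$ given $(X_0,\ldots,X_{-n})$.

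First I would prove by induction on $n$ that $P(T_n>t\mid X_0=i)\sim \bc^{(n)}(i)\,t^{-\alpha}L(t)$ uniformly in $i\in\cald$, where $\bc^{(n)}:=\sum_{k=0}^{n-1}G^k\bq^{(1)}$. The base case $n=1$ is Assumption~(A1). For the inductive step I write $T_n=Q_0+M_0T'_{n-1}$ with $T'_{n-1}$ the shifted partial sum, use a $+1$-stationarity shift to reduce to analyzing $P(Q_1+M_1T''_{n-1}>t\mid X_1=i)$, check that $T''_{n-1}\in\sigma(X_m,Q_{m,i},M_{m,i}:m\le 0)$ has conditional tail $(H\bc^{(n-1)})(i)\,t^{-\alpha}L(t)$ uniformly (from the inductive hypothesis and $P(X_{-1}=j\mid X_0=i)=H(i,j)$), and then invoke Proposition~\ref{klemma} to get $\bc^{(n)}=\bq^{(1)}+G\bc^{(n-1)}$. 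Since the spectral radius of $G$ on $B_b$ is less than $1$ by~(A3), $\bc^{(n)}\to(I-G)^{-1}\bq^{(1)}$ in $B_b$.

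Next I would establish a uniform a priori tail bound on $R$. Iterating Lemma~\ref{domin} (the map $y\mapsto Q_1+M_1y$ is monotone because $M_0>0$ a.s., and one has a.s. convergence $T_N+(\prod_{j=0}^{N-1}M_{-j})Z^{(-N)}\to R$ as $N\to\infty$ since $\prod_{j=0}^{N-1}M_{-j}\to 0$ under~(A3)) gives $P(R>t\mid X_0=i)\le P(Z>t\mid X_0=i)$ for all $i,t$, where $Z$ has tails $\sim d_i\,t^{-\alpha}L(t)$ with $\mathbf{d}=(d_i)_{i\in\cald}\in B_b$. By stationarity the same bound applies to $R_{-n}$ given $X_{-n}$. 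From the union bound $P(R>t\mid X_0=i)\le P(T_n>(1-\veps)t\mid X_0=i)+P(\Delta_n>\veps t\mid X_0=i)$, the residual is controlled by conditioning on $(X_0,\ldots,X_{-n})$, exploiting the independence of $R_{-n}$ from $\prod_{j=0}^{n-1}M_{-j}$ given the chain and the domination $R_{-n}\le_{\mathrm{st}}Z_{X_{-n}}$, and applying a Breiman-type tail asymptotic to $\bigl(\prod_{j=0}^{n-1}M_{-j}\bigr)Z_{X_{-n}}$ (justified by~(A2) and Potter's bound along the lines of the $A^{(4,1)}_{i,j}$ estimate in the proof of Proposition~\ref{klemma}). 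Since $E\bigl(\prod_{k=0}^{n-1}|M_{-k}|^\alpha\odin{X_{-n}=j}\mid X_0=i\bigr)=G^n(i,j)$, this yields
\beq
\limsup_{t\to\infty}\fracd{P(\Delta_n>\veps t\mid X_0=i)}{t^{-\alpha}L(t)}\le \veps^{-\alpha}(G^n\mathbf{d})(i).
\feq

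Combining the two halves gives
\beq
\limsup_{t\to\infty}\fracd{P(R>t\mid X_0=i)}{t^{-\alpha}L(t)}\le (1-\veps)^{-\alpha}\bc^{(n)}(i)+\veps^{-\alpha}(G^n\mathbf{d})(i),
\feq
and sending first $n\to\infty$ (so that $G^n\mathbf{d}\to 0$ while $\bc^{(n)}\to(I-G)^{-1}\bq^{(1)}$) and then $\veps\to 0^+$ produces the claimed bound. The main obstacle I anticipate is the Breiman-type step: delivering the precise constant $(G^n\mathbf{d})(i)$ \emph{uniformly} in $i\in\cald$ (which may be countably infinite) requires careful H\"older plus Potter-bound estimates on $\prod_{j=0}^{n-1}M_{-j}$ of essentially the same flavor as the treatment of $A^{(4,1)}_{i,j}$ inside the proof of Proposition~\ref{klemma}.
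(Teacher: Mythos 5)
Your plan is correct and reaches the stated bound, but it takes a genuinely different route from the paper. The paper never splits $R$ into $T_n+\Delta_n$: it starts the forward recursion at $R_0=Z$, the dominating variable of Lemma~\ref{domin}, so that by that lemma and induction the tails $P(R_n>t\mid X_n=i)$ are nonincreasing in $n$; since $R_n$ converges in law to $R$ irrespective of the initial value, $P(R>t\mid X_0=i)\le P(R_n>t\mid X_n=i)$ for every $n$, and the exact asymptotics of the right-hand side, $\bigl[\bq^{(1)}+G\bq^{(1)}+\cdots+G^{n-1}\bq^{(1)}+c^*G^n\one\bigr](i)\,t^{-\alpha}L(t)$, come from iterating Proposition~\ref{klemma} (with Lemma~\ref{acon} guaranteeing that hypothesis (ii) propagates); letting $n\to\infty$ and using that the spectral radius of $G$ is less than one finishes the proof. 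In effect, the paper's choice of initial condition makes your remainder part of the $n$-step iterate $T_n+\Pi_n Z$ (with $\Pi_n=\prod_{j=0}^{n-1}M_{-j}$), whose tail is already delivered by Proposition~\ref{klemma}, so no union bound, no $\veps$, and no separate Breiman-type estimate are needed. Your route instead must prove a conditional Breiman lemma with constants uniform over the (countably many) paths, to justify $\limsup_t P(\Delta_n>\veps t\mid X_0=i)/(t^{-\alpha}L(t))\le\veps^{-\alpha}(G^n\mathbf{d})(i)$; this does work for fixed $n$ — the bound $E(\Pi_n^\beta\mid \mbox{path})\le(\sup_i E|M_{0,i}|^\beta)^n$ from (A2) supplies the domination, and your identity $E\bigl(\Pi_n^\alpha\odin{X_{-n}=j}\mid X_0=i\bigr)=G^n(i,j)$ is correct — but it amounts to redoing the Potter/H\"older work already encapsulated in the $A^{(4,1)}_{i,j}$ estimate, i.e., strictly more labor for the same conclusion. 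Two small repairs to your write-up: in the inductive step, what Proposition~\ref{klemma} requires is the tail of the ($+1$-shifted) partial sum conditioned on $X_0=j$, which by stationarity is $\bc^{(n-1)}(j)$; feeding it $(H\bc^{(n-1)})(i)$ as written would double-count $H$ (your final recursion $\bc^{(n)}=\bq^{(1)}+G\bc^{(n-1)}$ is the right one). Also, the induction must carry the negative-tail constants along with the positive ones, since hypothesis (ii) of Proposition~\ref{klemma} is two-sided — this is precisely what Lemma~\ref{acon} is for, and you should invoke it.
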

\begin{proof}
Let $R_0=Z,$ where $Z$ is as in Lemma~\ref{domin}. Then, for all $t>0$ and $i \in \cald,$
we have $P(R_1>t|X_1=i)\leq P(R_0>t|X_0=i).$  This yields:
\beq
&&
P(R_2>t|X_2=i)= \sum_{j\in\cald}P(Q_2+M_2R_1>t|X_2=i,X_1=j)H(i,j)\\
&&
\quad
= \sum_{j\in\cald}P(Q_{2,i}+M_{2,i}R_1>t|X_1=j)H(i,j) \\
&&
\quad
\leq \sum_{j\in\cald}P(Q_{1,i}+M_{1,i}R_0|X_0=j)H(i,j) =P(R_1>t| X_1=i).
\feq
Therefore $P(R_2>t|X_2=i)\leq P(R_1>t|X_1=i).$ Iterating,
we obtain
\beqn
\label{iter}
P(R_n>t|X_n=i)\leq P(R_{n-1}>t|X_{n-1}=i),\quad \forall~n\in \nn,\,i\in\cald,\,t>0.
\feqn
By Proposition~\ref{klemma},
$\frac{P(R_n>t|X_n=i)}{t^{-\alpha}L(t)}\sim\bigl[\bq^{(1)}+\cdots+G^{n-1} \bq^{(1)} + c^* G^n \one\bigr](i),$
where $c^*>0$ is a constant such that $P(Z>t)\sim c^*L(t)t^{-\alpha}$ and $\one\in \rr^\cald$
has all components equal to $1.$ Since $P(Z>0)=1,$ it follows from \eqref{iter} that $P(R_n>t|X_n=i) \geq P(R>t|X_0=i)$ for $n\geq 0.$
Hence,  $\limsup_{t \to \infty}
\fracd{P(R>t|X_0=i)}{t^{-\alpha}L(t)} \leq (I-G)^{-1}\bq^{(1)}(i)$ for all $i\in\cald.$
\end{proof}
\begin{lemma}
\label{linf}
For all $i\in\cald,$ $\liminf_{t\to\infty}
\fracd{P(R>t|X_0=i)}{t^{-\alpha}L(t)} \geq (I-G)^{-1}\bq^{(1)}(i).$
\end{lemma}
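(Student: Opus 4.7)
The idea is dual to that of Lemma~\ref{lisup}: instead of initializing the iteration at a super-solution that dominates $R,$ I start the forward iteration from the minimal value $\witi R_0=0$ and then couple it with the stationary recursion. Setting $\witi R_n=Q_n+M_n\witi R_{n-1}$ for $n\geq 1$ and applying Proposition~\ref{klemma} inductively with (A1) as the base step, I obtain, uniformly in $i\in\cald,$
\[
P(\witi R_n>t\,|\,X_n=i)\sim \Bigl[\sum_{k=0}^{n-1} G^k \bq^{(1)}\Bigr](i)\cdot t^{-\alpha}L(t),\qquad t\to\infty.
\]
As $n\to\infty,$ the coefficient converges to $(I-G)^{-1}\bq^{(1)}(i),$ the target liminf.

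Next I couple $(\witi R_n)_{n\geq 0}$ with the stationary solution. Let $(R_n)_{n\in\zz}$ denote the stationary recursion driven by the same coefficients, so by stationarity $P(R_n>t\,|\,X_n=i)=P(R>t\,|\,X_0=i).$ Linearity yields $R_n=\witi R_n+\bigl(\prod_{k=1}^n M_k\bigr)R_0,$ and hence for any $\veps>0,$
\[
P(R>t\,|\,X_0=i)\geq P\bigl(\witi R_n>(1+\veps)t\,|\,X_n=i\bigr)-P\Bigl(\Bigl|\prod_{k=1}^n M_k\Bigr|\cdot|R_0|>\veps t\,\Big|\,X_n=i\Bigr).
\]
The desired lower bound will follow by taking $\liminf_{t\to\infty},$ then $n\to\infty,$ and finally $\veps\to 0,$ provided one shows that the error term (the last probability) is dominated by $t^{-\alpha}L(t)\cdot\rho_n$ with $\sup_i\rho_n\to 0.$

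The main obstacle is exactly this error estimate, since Lemma~\ref{lisup} only affords $R_0$ a regularly varying tail at rate $\alpha$ --- so no moment above $\alpha$ is at our disposal. I would combine three ingredients. First, a uniform envelope $P(|R_0|>u\,|\,X_0=j)\leq Cu^{-\alpha}L(u)$ for large $u,$ which follows from Lemma~\ref{lisup} applied to both $R$ and $-R$ (the latter satisfies a recursion of the same form since $M_n>0\,\as$). Second, $R_0$ and $(M_1,\ldots,M_n)$ are conditionally independent given the entire $X$-chain; conditioning on the chain and on the $M_k$'s and using the Potter bound (Lemma~1 of \citep{grey}) to handle the slowly varying factor yields
\[
P\Bigl(\Bigl|\prod_{k=1}^n M_k\Bigr|\cdot|R_0|>\veps t\,\Big|\,X_n=i\Bigr)\leq C(\veps)\,t^{-\alpha}L(t)\cdot E\Bigl[\Bigl(\prod_{k=1}^n|M_k|\Bigr)^{\alpha-\delta}+\Bigl(\prod_{k=1}^n|M_k|\Bigr)^{\alpha+\delta}\,\Big|\,X_n=i\Bigr]
\]
for any sufficiently small $\delta>0.$ Third, for $\delta$ small one has $\sup_i E|M_{0,i}|^{\alpha\pm\delta}<1$ --- by Jensen's inequality on the $\alpha-\delta$ side and by H\"older interpolation between the exponent $\alpha$ (where $\sup_i m_i<1$ by (A3)) and the exponent $\beta>\alpha$ from (A2) on the $\alpha+\delta$ side. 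Consequently the operator with entries $E|M_{0,i}|^{\alpha\pm\delta}H(i,j)$ is a strict contraction on $B_b,$ so the two conditional expectations in the display above decay geometrically in $n$ uniformly in $i.$ This supplies the required $\sup_i\rho_n\to 0$ and completes the plan.
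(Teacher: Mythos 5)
Your proposal is correct in outline, but it takes a genuinely different route from the paper. The paper's proof never has to estimate a coupling error: it starts the forward iteration not from $0$ but from a nonzero \emph{sub-solution}, namely an initial value $R_0$ with tail $P(R_0>t)=P(\calr>0,\,Q_0>t)$ where $\calr=Q_{-1}+M_{-1}Q_{-2}+\cdots$, so that (using $M_0>0$ a.s.) $P(R_0>t|X_0=i)\leq P(R>t|X_0=i)$; stochastic monotonicity of the map $y\mapsto Q+My$ then gives the exact domination $P(R_n>t|X_n=i)\leq P(R>t|X_0=i)$ for every $n$, and Proposition~\ref{klemma} yields $P(R_n>t|X_n=i)\sim t^{-\alpha}L(t)\bigl[\bq^{(1)}+G\bq^{(1)}+\cdots+G^{n-1}\bq^{(1)}+G^n\be\bigr](i)$ with $\be\geq 0$, so the liminf bound follows by letting $n\to\infty$ with no remainder term at all. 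You instead start from $\witi R_0=0$ and pay for it by having to kill the term $\bigl(\prod_{k=1}^n M_k\bigr)R_0$; your three ingredients (uniform tail envelope for $R$ via the limsup machinery, a Potter-type bound, and $\sup_i E|M_{0,i}|^{\alpha\pm\delta}<1$ for small $\delta$ giving geometric decay of $E[(\prod_{k=1}^n|M_k|)^{\alpha\pm\delta}|X_n=i]$) do make this work, and as a bonus your argument is quantitative in $n$. Two details to fix when writing it up: (1) the bound you quote from \citep[Lemma~1]{grey}, $L(\lambda t)/L(t)\leq\max\{\lambda^{\alpha},K\lambda^{-\delta}\}$, is too lossy here --- on the branch where $\prod_k|M_k|$ is small the $\lambda^{\alpha}$ term produces only an $O(1)$, non-decaying contribution; you need the genuine Potter bound \citep[Theorem~1.5.6]{rvariation} with small exponents in both branches (plus the trivial region $\veps t/\prod_k|M_k|$ below the envelope's threshold handled by a Markov-type bound) to arrive at your displayed estimate with $\pi^{\alpha-\delta}+\pi^{\alpha+\delta}$; (2) do not condition on the entire $X$-chain, since given the whole past the conditional law of $R_0$ is not controlled by an envelope stated given $X_0$ only; condition instead on $\sigma(X_0,\ldots,X_n,M_1,\ldots,M_n)$ (or use the tower property), under which the conditional law of $R_0$ is exactly its law given $X_0$, and make sure the envelope holds with one threshold uniformly in the state, which follows because the convergence in Proposition~\ref{klemma} and hence in Lemma~\ref{lisup} is uniform on $\cald$.
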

\begin{proof}
Let $\calr=Q_{-1}+M_{-1}Q_{-2}+M_{-1}M_{-2}Q_{-3}+\ldots.$
Let $R_0\geq 0$ be independent of
$(X_n,Q_{i,n},M_{i,n})_{n\geq 1,i\in\cald},$ s. t.
$P(R_0>t)=P(\calr>0,Q_0>t)$ for $t>0.$ Then $P(R_0>t|X_0=i)\leq P(Q_0+M_0\calr>t|X_0=i)=$
$P(R>t|X_0=i).$ We will now use induction to show that for $n\geq 0,$
\beqn
\label{monot}
P(R_n>t|X_n=i)\leq P(R>t|X_0=i)\qquad \forall~t>0,\,i\in\cald.
\feqn
Specifically, assuming \eqref{monot} for some $n \geq 0$ we obtain
\beq
&&
P(R_{n+1}>t | X_{n+1}=i)=\sum_{j\in\cald}P(Q_{n+1,i}+M_{n=1,i} R_n>t |X_n=j) H(i,j)
\\
&& \qquad
\leq \sum_{j\in\cald}P(Q_{1,i}+M_{1,i}R>t|X_0=j) H(i,j)
\\
&& \qquad
= P(Q_1+M_1R>t|X_1=i )=P(R>t | X_0=i),
\feq
Moroever, uniformly on $i\in\cald,$ $\frac{P(R_0>t|X_0=i)}{t^{-\alpha}L(t)}
= \frac{P(Q_{0,i}>t)P(\calr>0|X_0=i)}{t^{-\alpha}L(t)}\sim e_i,$
where $e_i=q_i^{(1)} P(\calr>0|X_0=i).$ Let $\be=(e_i)_{i\in\cald}.$ Then, by Proposition~\ref{klemma},
$P(R_n>t|X_n=i)\sim t^{-\alpha}L(t)\cdot \bigl[\bq^{(1)}+G\bq^{(1)}+\ldots+G^{n-1} \bq^{(1)} + G^n \be\bigr](i).$
This completes the proof of Lemma~\ref{linf} in view of \eqref{monot}.
\end{proof}
\section{Proof of Theorem~\ref{negs}}
\label{proofa}
The following result extends Lemma~4 of \citep{grey}.
\begin{lemma}
\label{knegs-lemma}
Let $Y \in \sigma\bigl(X_n,Q_{n,i},M_{n,i}:n \leq 0,i \in \cald\bigr)$
be a random variable s. t. $c_i^{(\eta)}:=\limsup\limits_{t\to\infty} \frac{P(Y\cdot \eta >t|X_0=i)}{t^{-\alpha} L(t)}$ and
$d_i^{(\eta)}:=\liminf\limits_{t\to\infty} \frac{P(Y\cdot \eta>t|X_0=i)}{t^{-\alpha} L(t)}$ are finite
for all $i\in\cald$ and $\eta\in\{-1,1\}.$
Then for all $i \in \cald,\eta\in\{-1,1\},$ \\
$\limsup\limits_{t\to\infty}\frac{P\bigl((Q_1+M_1Y)\cdot \eta >t\bigl|X_1=i\bigr)}{t^{-\alpha}L(t)}
\leq q_i^{(\eta)}+\sum_{j\in\cald}\sum_{\gamma\in\{-1,1\}}G_{\gamma}(i,j)c_j^{(\gamma)}$ and \\
$\liminf\limits_{t\to\infty}\frac{P\bigl((Q_1+M_1Y)\cdot \eta >t\bigl|X_1=i\bigr)}{t^{-\alpha}L(t)}
\geq q_i^{(\eta)}+\sum_{j\in\cald}\sum_{\gamma\in\{-1,1\}}G_{\gamma}(i,j)d_j^{(\gamma)}.$
\end{lemma}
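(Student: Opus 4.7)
The plan is to follow the blueprint of Proposition~\ref{klemma}, modified in two ways. First, since $Y$ now has only bounded rather than convergent tail densities, the i.i.d.\ prototype to invoke is the $\limsup/\liminf$ version \citep[Lemma~4]{grey} rather than the limit version \citep[Lemma~2]{grey}. Second, since $M_{1,i}$ is allowed to change sign, the contributions to the $\eta$-tail of $M_{1,i}Y_j$ must be tracked separately on the sign pieces of $M_{1,i}$.

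I would first repeat the conditioning argument of \eqref{unic} to obtain
\[
P\bigl((Q_1+M_1Y)\cdot\eta>t\mid X_1=i\bigr)=\sum_{j\in\cald}P\bigl((Q_{1,i}+M_{1,i}Y_j)\cdot\eta>t\bigr)H(i,j),
\]
where the $Y_j$ are independent of $(Q_{1,i},M_{1,i})_{i\in\cald}$ and distributed as $Y$ given $X_0=j$. Then I would redo the four-term decomposition $A_{i,j}^{(1)}+\cdots+A_{i,j}^{(4)}$, with the event $\{(Q_{1,i}+M_{1,i}Y_j)\cdot\eta>t\}$ in place of $\{Q_{1,i}+M_{1,i}Y_j>t\}$, followed by the subdivision $A_{i,j}^{(4)}=A_{i,j}^{(4,1)}+A_{i,j}^{(4,2)}+A_{i,j}^{(4,3)}$. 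The uniform-in-$i,j$ bounds on $A^{(1)},A^{(2)},A^{(3)},A^{(4,1)},A^{(4,2)}$ from Proposition~\ref{klemma} go through verbatim (they depend only on $|M_{1,i}|$, not on its sign), so the only substantive new work is for $A_{i,j}^{(4,3)}$.

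For $A_{i,j}^{(4,3)}$ I would split the expectation according to $\{M_{1,i}>0\}$ and $\{M_{1,i}<0\}$. On the positive part, $\{(Q_{1,i}+M_{1,i}Y_j)\cdot\eta>t\}$ reduces, up to the bounded shift by $Q_{1,i}$, to $\{Y_j\cdot\eta>t/M_{1,i}\}$, whose tail density is sandwiched between $d_j^{(\eta)}$ and $c_j^{(\eta)}$. On the negative part it becomes $\{Y_j\cdot(-\eta)>t/|M_{1,i}|\}$, with density between $d_j^{(-\eta)}$ and $c_j^{(-\eta)}$. Combining these via the regular-variation estimates \eqref{first}--\eqref{forth} with reverse Fatou (for $\limsup$) and Fatou (for $\liminf$), then summing over $j$ against $H(i,j)$ and using $m_i^{(\gamma)}H(i,j)=G_\gamma(i,j)$, yields the two stated inequalities (after the natural reindexing $\gamma\mapsto\gamma\eta$ in the summand). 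The main obstacle is the same as in Proposition~\ref{klemma}: securing every error bound uniformly in $i,j\in\cald$ so that limits commute with the (possibly infinite) sum over $j$; assumption (A4) is the critical ingredient that eliminates the contribution of $\{M_{1,i}\approx 0\}$ uniformly as $\veps\to 0$.
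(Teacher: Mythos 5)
Your first step is exactly the paper's: the conditioning identity \eqref{unic} reduces the conditional tail to $\sum_{j}P\bigl((Q_{1,i}+M_{1,i}Y_j)\cdot\eta>t\bigr)H(i,j)$. But where the paper stops -- it simply applies \citep[Lemma~4]{grey} separately to each term $P\bigl((Q_{1,i}+M_{1,i}Y_j)\cdot\eta>t\bigr)$, since for fixed $i,j$ this is precisely the i.i.d.\ situation with $\limsup/\liminf$ tail constants -- you propose to re-prove that prototype by rerunning the $A^{(1)},\dots,A^{(4,3)}$ decomposition of Proposition~\ref{klemma} with a sign-split of $M_{1,i}$. That is legitimate in outline, and your remark that the summand should be read with the index $\gamma\mapsto\gamma\eta$ is indeed how the lemma is used in Section~\ref{proofa} (the recursion $\ba^{(\eta)}=\bq^{(\eta)}+G_1\ba^{(\eta)}+G_{-1}\ba^{(-\eta)}$), so you have the right statement in mind. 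The cost of your route is that it redoes work the citation already provides.

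Two caveats on your execution plan. First, the bounds from Proposition~\ref{klemma} do not all go through ``verbatim'': the estimate of $A^{(2)}_{i,j}$ there uses $M_{1,i}>0$ (the step $P(M_{1,i}Y_j\leq-\veps t,\,Y_j\geq-\veps t^{\rho})\leq P(M_{1,i}\geq t^{1-\rho})$ fails for negative $M_{1,i}$ with $Y_j$ large positive), so with sign-changing $M_{1,i}$ you must also control the upper tail of $Y_j$ there. Second, and more importantly, the uniformity in $j$ that you declare to be the main goal is not available from the hypotheses of this lemma: unlike condition (ii) of Proposition~\ref{klemma}, here one assumes neither uniform convergence over $j$ nor $\sup_j c_j^{(\eta)}<\infty$, and several of the error bounds (e.g.\ for $A^{(2)}_{i,j}$ and $A^{(4,1)}_{i,j}$) require a constant $C$ controlling the tails of $Y_j$ uniformly in $j$. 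Note that the conclusion is only claimed pointwise in $i$, not uniformly. The genuine delicate point when $\cald$ is infinite is interchanging $\limsup$ with the sum over $j$: Fatou handles the $\liminf$ half, but the $\limsup$ half needs a summable dominating bound, which in the intended application ($Y=R$, $-R^*\leq R\leq R^*$) is supplied by Theorem~\ref{vart} applied to $R^*$ rather than by the hypotheses of the lemma itself. So either restrict the uniformity claim to what assumptions (A1)--(A4) give about $(Q_{1,i},M_{1,i})$, and invoke the extra domination where the lemma is applied, or state explicitly that the $\limsup$ inequality is proved under such a domination (or for finite $\cald$), as the paper's terse proof implicitly does.
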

\begin{proof}
Let $(Y_j)_{j\in \cald}$ be random variables independent of both $(X_n)_{n \in \zz}$ and $(Q_{1,i},M_{1,i})_{i \in \cald},$
such that $P(Y_j\cdot \eta>t)=P(Y\cdot \eta> t|X_0=j)$ for $\eta\in\{-1,1\}.$
Then we have $P\bigl((Q_1+M_1Y)\cdot \eta >t|X_1=i)
=\sum_{j \in \cald} P\bigl((Q_{1,i}+M_{1,i} Y_j)\cdot \eta >t\bigr)H(i,j)$ according ro \eqref{unic}.
To complete the proof, apply \citep[Lemma~4]{grey} separately to each term $P\bigl((Q_{1,i}+M_{1,i} Y_j)\cdot \eta >t\bigr).$
\end{proof}
Let $R^*=|Q_0|+\sum_{n=1}^{\infty} |Q_{-n}| \prod_{i=0}^{n-1} |M_{-i}|$ be a stationary solution of the
equation $R_{n+1}=|Q_n|+|M_n|R_n.$ Notice that $-R^*$ is a stationary solution of the equation $R_{n+1}=-|Q_n|+|M_n|R_n.$
Since $P(-R^*\leq R\leq R^*)=1,$ Theorem~\ref{vart} ensures that Lemma~\ref{knegs-lemma} can be applied with $Y=R.$
Let $a_i^{(\eta)}=\limsup_{t\to\infty}\frac{P(R\cdot \eta>t|X_0=i)}{t^{-\alpha}L(t)}$ and
$b_i^{(\eta)}=\liminf_{t\to\infty}\frac{P(R\cdot \eta>t|X_0=i)}{t^{-\alpha}L(t)}.$ Denote
$\ba^{(\eta)}=\bigl(a_i^{(\eta)}\bigr)_{i\in\cald},$ $\bB^{(\eta)}=\bigl(b_i^{(\eta)}\bigr)_{i\in\cald}$ and
$\ba=\ba^{(-1)}+\ba^{(1)},$ $\bB=\bB^{(-1)}+\bB^{(1)}.$
The application of Lemma~\ref{knegs-lemma} yields $\ba \leq \bq+G\ba$ and $\bB \geq \bq+G\bB,$ which implies
$(I-G)^{-1}\bq\leq  \bB\leq  \ba\leq (I-G)^{-1}\bq.$ This is only possible if $\ba^{(\eta)}=\bB^{(\eta)},$
the inequalities in the conclusions of Lemma~\ref{knegs-lemma} are actually equalities, and thus
$\ba^{(\eta)}=\bq^{(\eta)}+G_1\ba^{(\eta)}+G_{-1}\ba^{(-\eta)},$
which implies the result of Theorem~\ref{negs}.
\section{Proof of Theorem~\ref{inf-gk}}
\label{gipr}
Consider the {\bf backward} chain $Z_n=Y_{-n},$ $n\in\zz,$ with transition matrix
$H(x,y)=P(Y_n=y|Y_{n+1}=x)=\frac{P(Y_0=y)}{P(Y_0=x)}\cdot P(Y_{n+1}=y|Y_n=x).$
Fix $y^* \in \cals$ and a constant $r \in (0,1).$
Let $(\eta_n)_{n \in \zz}$ be a sequence of i.i.d. ``coins"
independent of both $(Z_n)_{n \in \zz}$ and $\bigl(Q_{n,i}, M_{n,i}\bigr)_{n \in \zz,i\in\cald},$
such that $P(\eta_0=1)=r,$ $P(\eta_0=0)=1-r.$ Let
$N_0=0$ and $N_i=\inf\{n>N_{i-1}:Z_n=y^*,\eta_n=1\},$ $i \in \nn.$
The blocks $(Z_{_{N_i}},\ldots,Z_{_{N_{i+1}-1}})$ are
independent for $i\geq 0$ and identically distributed for $i\geq 1.$
Between two successive regeneration times $N_i$ the chain $(Z_n)_{n\geq 0}$ evolves according to a sub-Markov kernel $\Theta$ given by
$H(x,y)=\Theta(x,y)+r \odin{y=y^*} H(x,y),$ i.e.
\beqn
\label{between}
\Theta(x,y)=P(Z_1=y,N_1>1|Z_0=x).
\feqn
Theorem~A implies that $E\bigl(e^{\beta N_1}\bigl|Z_0\bigr)$
is uniformly bounded for some $\beta>0$ (see the paragraph following Theorem~1 in \citep{lalley-reg}).
For $i \geq 0,$ let
\beq
A_i&=&Q_{_{N_i}}+Q_{_{N_i+1}}M_{_{N_i}}+...+
Q_{_{N_{i+1}-1}} M_{_{N_i}}M_{_{N_i+1}} \ldots
M_{_{N_{i+1}-2}}
\\
B_i&=& M_{_{N_i}}M_{_{N_i+1}} \ldots M_{_{N_{i+1}-1}}.
\feq
The pairs $(A_i,B_i)$ are independent for $i\geq 0,$ identically distributed for $i\geq 1$ and
$R=A_0+\sum_{n=1}^\infty A_n \prod_{i=0}^{n-1} B_i.$
To prove Theorem~\ref{inf-gk} we will verify the conditions of
the following theorem for $(A_i,B_i)_{i \geq 1}.$
\begin{theoremb*} \citep{kesten-randeq, goldie}
Let $(A_i,B_i)_{i \geq 1}$ be i.i.d. and
\item[(i)] For some $\alpha>0,$ $E\left(|A_1|^\alpha\right)=1$ and $E\left(|B_1|^\alpha \log^+
|B_1|\right)<\infty.$
\item[(ii)] $P(\log |B_1|=\delta \cdot k~\mbox{\rm for some}~k\in\zz|B_1 \neq 0)<1$ for all $\delta>0.$
\par
Let $\witi R=A_1+\sum_{n=2}^\infty A_n \prod_{i=1}^{n-1} B_i.$ Then
\item [(a)]
$\lim\limits_{t \to \infty} t^\alpha P\bigl(\witi R>t\bigr)=K_+,$
$\lim\limits_{t \to \infty} t^\alpha P\bigl(\witi R < -t\bigr)=K_-$ for some $K_+, K_-\geq 0.$
\item [(b)]
If $P(B_1<0)>0,$ then $K_+=K_-.$
\item [(c)]
$K_++K_->0$ if and only if $P\bigl(A_1=(1-B_1)c\bigr)<1$ for all $c \in \rr.$
\end{theoremb*}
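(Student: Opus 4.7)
I would follow Goldie's implicit renewal approach, which is purpose-built for stationary solutions of affine distributional fixed-point equations $\witi R \stackrel{d}{=} A_1+B_1\witi R$ with $\witi R$ independent of $(A_1,B_1).$ With $\alpha$ the Cram\'er root given by condition (i), the relevant object is the multiplicative random walk $S_n=\log|B_1\cdots B_n|,$ which becomes a centered walk of law $\mu$ after an Esscher tilt by factor $e^{\alpha \cdot }$ applied to the law of $\log|B_1|.$ By (i) the tilted law $\mu$ is a probability measure, and by (ii) it is non-arithmetic on $\rr.$

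The first step is to rewrite the fixed-point identity as an implicit convolution equation. Starting from $P(\witi R>t)=P(A_1+B_1\witi R>t),$ I subtract $P(B_1\witi R>t)$ from both sides, multiply by $t^\alpha,$ and change variables $u=\log t.$ This produces $h=h\ast \mu + g,$ where $h(u)=e^{\alpha u}P(\witi R>e^u)$ and $g(u)=e^{\alpha u}\bigl[P(A_1+B_1\witi R>e^u)-P(B_1\witi R>e^u)\bigr].$ The left tail is handled by the parallel equation for $\witi h(u)=e^{\alpha u}P(\witi R<-e^u).$

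The second step is to verify direct Riemann integrability of $g.$ This needs a preliminary crude bound $P(|\witi R|>t)\leq Ct^{-\alpha'}$ for some $\alpha'<\alpha,$ obtained from a Chebyshev argument at an exponent where $E(|B_1|^{\alpha'})<1$ (standard, from convexity of $\beta\mapsto\log E(|B_1|^\beta)$ vanishing at $\beta=\alpha$). With this tail estimate I decompose $g$ into a piece carrying the tail of $A_1$ (integrable by the moment hypotheses) and a remainder whose modulus of continuity is controlled using $E(|B_1|^\alpha\log^+|B_1|)<\infty.$ The key renewal theorem applied to the renewal measure of $\mu$ then yields $h(u)\to K_+$ and $\witi h(u)\to K_-$ as $u\to\infty,$ giving part (a).

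For (b), when $P(B_1<0)>0$ the sign of $B_1\cdots B_n$ flips infinitely often, and a sign-flipping coupling between trajectories contributing to the two tails forces $K_+=K_-.$ For (c), the equality $K_++K_-=0$ forces the driving term $g$ to be trivial on a set of full renewal measure; unwinding this collapses the fixed-point equation to $P\bigl(A_1=(1-B_1)c\bigr)=1$ for some $c\in\rr,$ under which $\witi R\equiv c$ deterministically and both tails vanish, and conversely. The main obstacle throughout is the direct Riemann integrability in step two: $g$ is defined in terms of the yet-unknown tail of $\witi R,$ which is exactly why Goldie's argument proceeds through an \emph{implicit} rather than explicit renewal equation, and where the extra moment $E(|B_1|^\alpha\log^+|B_1|)<\infty$ is indispensable for controlling the oscillations of the driving term.
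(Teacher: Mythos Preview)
The paper does not prove Theorem~B at all: it is quoted verbatim from \citep{kesten-randeq,goldie} and invoked as a black box in Section~4, where the authors only verify that the block variables $(A_i,B_i)$ satisfy its hypotheses (Lemmas~\ref{kapm}--\ref{lemf}) and then read off the conclusions. So there is no ``paper's own proof'' to compare against.

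That said, your sketch is a faithful outline of Goldie's implicit-renewal argument in \citep{goldie}, which is precisely the cited source. A couple of minor remarks. First, note that the statement as printed has a typo: condition~(i) should read $E(|B_1|^\alpha)=1$ (the Cram\'er root is for the multiplicative coefficient) together with $E(|A_1|^\alpha)<\infty$; you have silently corrected this by tilting the law of $\log|B_1|$, which is the right move. Second, your treatment of~(b) via a ``sign-flipping coupling'' is a bit vague; Goldie's actual argument writes two coupled renewal equations for $h$ and $\witi h$ and shows they share the same driving term when $P(B_1<0)>0$, forcing identical limits. Third, in~(c) the forward direction (degeneracy implies $K_+=K_-=0$) is immediate since then $\witi R\equiv c$, but the converse in \citep{goldie} is not quite ``unwinding $g$'': it uses the explicit integral formula $K_++K_-=\frac{1}{\alpha m}\,E\bigl[|A_1+B_1\witi R|^\alpha-|B_1\witi R|^\alpha\bigr]$ (with $m$ the tilted mean) and a convexity/support argument to show the integrand vanishes a.s.\ only in the degenerate case. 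None of this affects the validity of your plan, which is the standard route.
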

\noindent
For $\beta\geq 0$ define matrices $H_\beta$ and $\Theta_\beta$ by setting
$H_\beta(x,y)=H(x,y)E(|M_{0,\zeta(y)}|^\beta )$ and $\Theta_\beta(x,y)=\Theta(x,y)E( |M_{0,\zeta(y)}|^\beta ).$ Since
we have $E\bigl({\prod}_{i=0}^n |M_{-i}|^\beta \odin{Z_n=y}\Bigl| Z_0=x\bigr)=
E\bigl(|M_0|^\beta\bigl|Z_0=x\bigr) H_\beta^n(x,y)$ and, for $y\not =y^*,$
\beq
E\Bigl(\odin{n<N_1,Z_n=y} {\prod}_{i=0}^n |M_{-i}|^\beta \bigl| Z_0=x\Bigr)=
E\bigl(|M_0|^\beta\bigl|Z_0=x\bigr) \Theta_\beta^n(x,y),
\feq
\citep[Lemma~2.3 and Proposition~2.4]{omar} show that \eqref{kappa-239} holds and that the spectral radius of $H_\alpha$ is 1,
while the spectral radius of $\Theta_\alpha$ is less than 1. Hence, see (2.43) in \citep{mars}, we have:
\begin{lemma}
\label{kapm}
$E(|B_1|^\alpha)=1.$
\end{lemma}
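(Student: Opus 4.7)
The plan is to evaluate $E(|B_1|^\alpha)$ in closed form via the regenerative decomposition of $(Z_n)$ at the times $N_i,$ and then use the rank-one relation between $H_\alpha$ and $\Theta_\alpha,$ together with the identity $\rho(H_\alpha)=1,$ to identify the answer with $1.$

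First, by the strong Markov property of $(Z_n)$ at $N_1$ and the fact that $Z_{N_1}=y^*$ $\as,$ one has $E(|B_1|^\alpha)=E\bigl(\prod_{k=0}^{N_1-1}|M_{-k}|^\alpha \bigl| Z_0=y^*\bigr).$ Decomposing on $\{N_1=n,\,Z_{n-1}=y\}$ and writing $\{N_1=n\}=\{N_1>n-1\}\cap\{Z_n=y^*\}\cap\{\eta_n=1\}$ (with $\eta_n$ independent of everything else), the $\Theta_\alpha$-identity cited just before the lemma, combined with the Neumann series $\sum_{n\geq 0}\Theta_\alpha^n=(I-\Theta_\alpha)^{-1}$ (convergent because $\Theta_\alpha$ has spectral radius less than $1$), yields
\beq
E(|B_1|^\alpha)&=&r\cdot \bigl[(I-\Theta_\alpha)^{-1}\,H_\alpha\bigr](y^*,y^*).
\feq

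Second, the decomposition $H(x,y)=\Theta(x,y)+r\,\odin{y=y^*}H(x,y)$ lifts to the weighted operators as $H_\alpha(x,y)=\Theta_\alpha(x,y)+r\,u(x)\,\odin{y=y^*},$ where $u(x):=H_\alpha(x,y^*)$ is the $y^*$-column of $H_\alpha.$ Since $H_\alpha$ has spectral radius $1$ and $(Y_n)$ is irreducible, a positive right eigenvector $\phi\in B_b$ exists with $H_\alpha\phi=\phi$ and $\phi(y^*)>0.$ Then $(I-\Theta_\alpha)\phi=(H_\alpha-\Theta_\alpha)\phi=r\,\phi(y^*)\,u,$ so $(I-\Theta_\alpha)^{-1}u=\phi/(r\,\phi(y^*))$ and, evaluating at $y^*,$ $[(I-\Theta_\alpha)^{-1}u](y^*)=1/r.$ Since the $y^*$-column of $H_\alpha$ is $u,$ the bracketed quantity in the display above equals $1/r,$ so $E(|B_1|^\alpha)=1.$

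The main technical obstacle is securing the positive right Perron eigenvector $\phi\in B_b$ for $H_\alpha$ on the possibly infinite state space $\cals;$ this comes down to quasi-compactness of $H_\alpha,$ which follows from the exponential tails of $N_1$ (Theorem~A) together with the uniform bound (i) on $|M_0|.$ An equivalent purely algebraic alternative (apparently the route behind formula (2.43) of \citep{mars}) uses the rank-one Sherman-Morrison structure to argue directly that singularity of $I-H_\alpha$ forces $r\,[(I-\Theta_\alpha)^{-1}u](y^*)=1,$ bypassing any appeal to an eigenvector.
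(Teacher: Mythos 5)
Your resolvent identity $E(|B_1|^\alpha)=r\,\bigl[(I-\Theta_\alpha)^{-1}H_\alpha\bigr](y^*,y^*)$ is correct (it follows from the two displayed moment identities and $\rho(\Theta_\alpha)<1$, noting that the $\Theta_\beta$-identity extends to $y=y^*$ because $\Theta(x,y^*)=(1-r)H(x,y^*)$), and so is the rank-one algebra that converts it into $E(|B_1|^\alpha)=1$. This is a genuinely different route from the paper: after importing $\rho(H_\alpha)=1$ and $\rho(\Theta_\alpha)<1$ from \citep{omar}, the paper simply cites the renewal-type computation (2.43) of \citep{mars} for the conclusion, whereas you re-derive that computation explicitly. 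What your version buys is a self-contained, transparent mechanism (a Sherman--Morrison relation between $H_\alpha$ and the sub-Markov kernel $\Theta_\alpha$); what the paper's citation buys is brevity and the avoidance of any spectral-theoretic justification beyond the two radius statements.

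The one step you cannot leave as stated is the existence of a positive right eigenvector $\phi\in B_b$ with $H_\alpha\phi=\phi$ and $\phi(y^*)>0$: on the countably infinite state space $\cals$, irreducibility plus $\rho(H_\alpha)=1$ does not by itself produce an $\ell^\infty$ Perron eigenvector, and your appeal to quasi-compactness is only a sketch (it is plausible here, given finite $\cald$, the uniform bound $m_0^{-1}<|M_0|<m_0$ and the uniformly exponential tails of $N_1$, but it is not proved). The cleaner fix is the fallback you mention, which can be made rigorous with one added fact: since $H_\alpha$ is a positive bounded operator on the Banach lattice $B_b$, its spectral radius belongs to its spectrum, so $\rho(H_\alpha)=1$ already implies that $I-H_\alpha$ is not invertible. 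Writing $I-H_\alpha=(I-\Theta_\alpha)\bigl(I-r\,v\otimes\delta_{y^*}\bigr)$ with $v=(I-\Theta_\alpha)^{-1}u$ and $u(\cdot)=H_\alpha(\cdot,y^*)$, invertibility of the rank-one factor fails only if $r\,v(y^*)=1$, which is exactly $r\,[(I-\Theta_\alpha)^{-1}H_\alpha](y^*,y^*)=1$. With that substitution (or simply with the paper's citation of (2.43) in \citep{mars}), your argument is complete; without it, the eigenvector step is a genuine gap.
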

We next show that $\log |B_0|$ is a non-lattice random variable if $y^*\in\cald.$
\begin{lemma}
\label{lef1}
Assume $y^*\in\cald^1$ and let $ V=\sum\limits_{n=0}^{N_1-1} \log |M_{-n}|=\log |B_0|.$ Then
for any $\delta>0$ we have $P\bigl(V \in \delta\cdot\zz\bigl|Z_0=y^*\bigr)<1,$ where $\delta \cdot \zz:=\{k\cdot \delta:k\in\zz\}.$
\end{lemma}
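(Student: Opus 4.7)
The plan is to argue by contradiction: fix $\delta>0$ and suppose that $P(V\in\delta\zz\mid Z_0=y^*)=1$.

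\emph{Step 1 (Lattice dichotomy on positive-probability excursions).} Under the hypothesis, $P(V\in\delta\zz\mid\pi)=1$ for every excursion path $\pi=(Z_0,\ldots,Z_{N_1})$ of positive conditional probability given $Z_0=y^*$. On such $\pi$ of length $n$, the quantity $V=\sum_{k=0}^{n-1}\log|M_{-k,\zeta(Z_k)}|$ is a sum of $n$ mutually independent summands, since by Definition~\ref{inde} the $(M_{n,i})$ are mutually independent and independent of $(Z_n)$. The classical lattice dichotomy for sums of independent variables (if $A+B\in\delta\zz$ a.s.\ with $A,B$ independent, then $\mathrm{supp}\,A\subset\delta\zz+c_A$ and $\mathrm{supp}\,B\subset\delta\zz+c_B$ for some $c_A+c_B\in\delta\zz$) then produces constants $\{c_i\}_{i\in\cald}$ with $\mathrm{supp}\log|M_{0,i}|\subset\delta\zz+c_i$ for every state $i\in\cald$ visited by some positive-probability excursion, together with the cocycle relation $\sum_{k=0}^{n-1}c_{\zeta(Z_k)}\in\delta\zz$ for every such $\pi$.

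\emph{Step 2 (Refining via intra-block letter freedom).} By Theorem~A, an excursion of $Z$ of length equal to the block length $r$ corresponds, modulo the independent regeneration coin $\eta_r$, to a length-$r$ state $Y_{-1}=(y_1,\ldots,y_r)$ with $y_1=y^*$; the induced $X$-trajectory is $(X_0,X_{-1},\ldots,X_{-(r-1)})=(y^*,y_r,\ldots,y_2)$. By C-chain cylinder positivity (Definition~\ref{typc}(i)) every choice of $(y_2,\ldots,y_r)\in\cald^{r-1}$ yields such a positive-probability excursion with $N_1=r$. The associated cocycle reads $c_{y^*}+\sum_{j=2}^r c_{y_j}\in\delta\zz$: setting all $y_j=y^*$ gives $rc_{y^*}\in\delta\zz$, i.e.\ $c_{y^*}\in(\delta/r)\zz$, while replacing a single $y_{j_0}$ by an arbitrary letter $a\in\cald$ and subtracting the two cocycles yields $c_a\equiv c_{y^*}\pmod{\delta\zz}$ for every $a\in\cald$.

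\emph{Step 3 (Contradiction with (iii)).} Steps~1 and~2 together give $\mathrm{supp}\log|M_{0,i}|\subset\delta\zz+c_{y^*}\subset(\delta/r)\zz$ for every $i\in\cald$, so $\log|M_0|\in(\delta/r)\zz$ almost surely. Since $P(M_0\neq 0)=1$ by hypothesis~(i), this contradicts hypothesis~(iii) of Theorem~\ref{inf-gk} applied with $\delta':=\delta/r$.

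\emph{Main obstacle.} The subtle point is that the dichotomy in Step~1 only concentrates each $\log|M_{0,i}|$ on a shifted lattice $\delta\zz+c_i$, not on $\delta\zz$ itself, so a direct appeal to~(iii) with the original $\delta$ fails. The resolution is Step~2: using the C-chain cylinder positivity within a single Theorem~A block to extract enough cocycle relations to force a common shift $c_{y^*}\in(\delta/r)\zz$. The resulting $(\delta/r)$-lattice concentration of $\log|M_0|$ is then precisely what the ``for all $\delta>0$'' strength of~(iii) forbids.
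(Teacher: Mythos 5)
Your proof is correct and follows essentially the same route as the paper's: both argue by contradiction, use the Lalley/Theorem~A regeneration structure to exhibit explicit positive-probability excursions starting from $y^*$, exploit the independence of the induced coefficients from the path via the lattice-support dichotomy, and conclude that each $\log|M_{0,i}|$ is a.s.\ concentrated on a refined lattice $\frac{\delta}{N}\zz$, contradicting hypothesis (iii) at that smaller scale. The only difference is bookkeeping: the paper uses excursions of the form $(y^*,y,\ldots,y)$ of some length $m$ and combines the relations $m\log|M_{0,y^*}|\in\delta\zz$ and $(m-1)\log|M_{0,y}|+\log|M_{0,y^*}|\in\delta\zz$ to reach the scale $\delta/(m(m-1))$, whereas you use length-$r$ blocks with single-letter substitutions and explicit coset shifts $c_i$ to reach the common scale $\delta/r$.
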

\begin{proof}
For $y\in\cald$ and $k\in\nn,$ let $y_{(k)}$ denote $(y_1,\ldots,y_k)\in\cald^k$ such that
$y_2=\cdots=y_k=y$ and $y_1=y^*.$ By Lalley's Theorem~A, for any $y\in \cald,$
\beqn
\label{impl}
P(Z_0=y_{(1)},Z_1=y_{(m)},\ldots,Z_{m-1}=Z_{N_1-1}=y_{(2)})>0
\feqn
for some $m>1.$ Therefore, $P\bigl(V \in \delta\cdot\zz|Z_0=(y^*)\bigr)=1$ along with
\eqref{impl} imply: (i) using $y=y^*,$ $P\bigl(m\cdot \log |M_{0,y^*}|
\in \delta \cdot \zz|Z_0=(y^*)\bigr)=1$; (ii) using general $y\in\cald,$
$P\bigl((m-1)\cdot \log |M_{0,y}|+\log |M_{0,y^*}|
\in \delta \cdot \zz\bigl|Z_0=(y^*)\bigr)=1.$ Therefore, we would have $P\bigl(m(m-1)\cdot \log |M_{0,y}|
\in \delta \cdot \zz\bigl|Z_0=(y^*)\bigr)=1$ for any $y\in \cald,$ contradicting condition (iii)
of Theorem~\ref{inf-gk}.
\end{proof}
The proof of the next lemma is verbatim the proof of (2.44) in \citep{mars}, and therefore is omitted.
\begin{lemma}
\label{lemf}
$\exists~\beta>\alpha$ s. t. $E\bigl[\bigl(\sum_{n=0}^{N_1-1} \prod_{i=0}^{n-1} |M_{-i}| \bigr)^\beta\bigl| Z_0\bigr]$
is bounded $\as$
\end{lemma}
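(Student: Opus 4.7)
The plan is to rewrite the random sum as an infinite series and apply Minkowski's inequality in $L^\beta(\cdot\,|\,Z_0)$ (or its subadditive analogue $(s+t)^\beta\leq s^\beta+t^\beta$ in the degenerate case $\beta<1$), controlling each resulting term via the iterated sub-Markov kernel $\Theta_\beta$ introduced just before Lemma~\ref{kapm}. The key input to be exploited is the fact, cited there from \citep[Prop.~2.4]{omar}, that $\Theta_\alpha$ has spectral radius strictly less than $1$ on $B_b.$

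Concretely, starting from
$$\sum_{n=0}^{N_1-1}\prod_{i=0}^{n-1}|M_{-i}|=\sum_{n=0}^{\infty}\odin{N_1>n}\prod_{i=0}^{n-1}|M_{-i}|,$$
Minkowski reduces the claim to a uniform bound on $\sum_{n\geq 0}\bigl(E[\odin{N_1>n}\prod_{i=0}^{n-1}|M_{-i}|^\beta\,|\,Z_0]\bigr)^{1/\beta}.$ The identity displayed before Lemma~\ref{kapm} gives
$$E\bigl[\odin{N_1>n}\prod_{i=0}^{n}|M_{-i}|^\beta \,\big|\,Z_0=x\bigr]=E\bigl(|M_0|^\beta\,\big|\,Z_0=x\bigr)\,\Theta_\beta^n\one(x),$$
and a harmless index shift (absorbing a factor of $m_0^\beta$ via the a.s.\ bound $|M_{-i}|>m_0^{-1}$ from condition (i) of Theorem~\ref{inf-gk}) yields the corresponding estimate with $\prod_{i=0}^{n-1}$ in place of $\prod_{i=0}^{n}.$ Provided we can choose $\beta\in(\alpha,\alpha+\veps)$ so that the spectral radius $\rho(\Theta_\beta)$ remains strictly below $1,$ we then have $\|\Theta_\beta^n\one\|\leq C\rho(\Theta_\beta)^n,$ the Minkowski series converges geometrically, and the resulting bound is uniform in $Z_0,$ which is the content of the lemma.

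The step I expect to be the main obstacle is exactly the continuity (at least one-sided) $\rho(\Theta_\beta)\downarrow\rho(\Theta_\alpha)$ as $\beta\downarrow\alpha,$ which is not automatic for nonnegative operators on the non-separable space $B_b$ over a countable state space. The fix is the Krein--Rutman/Perron--Frobenius machinery already invoked in \citep{omar,mars}: in the present bounded-$M$ setting the map $\beta\mapsto\Theta_\beta(x,y)=\Theta(x,y)E(|M_{0,\zeta(y)}|^\beta)$ is analytic and uniformly dominated, so a dominated-convergence perturbation argument upgrades the automatic upper semicontinuity of the spectral radius to genuine continuity on a right-neighborhood of $\alpha$ and produces the required $\veps>0.$ This perturbation step is precisely the content of the proof of (2.44) in \citep{mars} to which the authors defer.
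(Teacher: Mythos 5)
Your proposal is correct and is essentially the argument the paper omits by deferring to the proof of (2.44) in \citep{mars}: expand the sum over $n$, apply Minkowski's inequality (or subadditivity of $t\mapsto t^\beta$ when $\beta<1$) under $P(\cdot|Z_0=x)$, convert each term via the displayed identity into $\Theta_\beta^n$, and sum the resulting geometric series uniformly in $Z_0$ using $\rho(\Theta_\beta)<1$ for $\beta$ slightly above $\alpha$. The only comment is that the perturbation step you single out as the main obstacle is immediate in this bounded setting: since $|M_0|\leq m_0$ a.s., $E\bigl(|M_{0,\zeta(y)}|^\beta\bigr)\leq m_0^{\beta-\alpha}E\bigl(|M_{0,\zeta(y)}|^\alpha\bigr)$, so $\Theta_\beta\leq m_0^{\beta-\alpha}\Theta_\alpha$ entrywise and hence $\rho(\Theta_\beta)\leq m_0^{\beta-\alpha}\rho(\Theta_\alpha)<1$ once $\beta-\alpha$ is small (alternatively, the automatic upper semicontinuity of the spectral radius already suffices; full continuity is not needed).
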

\paragraph{Completion of the proof of Theorem~\ref{inf-gk}} $\mbox{}$ \\
{\bf (a)--(d)} It follows from Lemmas~\ref{kapm}--\ref{lemf} that the conclusions of
Theorem~B can be applied to $(A_1,B_1).$
Claims (a) through (d) of Theorem~\ref{inf-gk} follow then from the identity $R=A_0+B_0\witi R$ and the
independence of $(A_0,B_0)$ of $\witi R$ under the conditional measures $P(\cdot|Z_0=x).$ In particular,
for $\eta\in\{-1,1\},$
\beqn
\label{kfor}
\lim_{t\to\infty}P\bigl(R\cdot \eta >t|Z_0)=E\bigl(|B_0|^\alpha(\odin{B_0\cdot \eta >0} K_++ \odin{B_0 \cdot \eta <0} K_-)\bigl|Z_0\bigr)
\feqn
Note that Theorem~A-(iv) implies $P(N_1~\mbox{is odd})>0$ and
$P(N_1~\mbox{is even})>0.$ Therefore, $P(M_0<0)>0$ yields $P(B_0<0|Z_0=x)>0$ for all $x\in \cals.$
\\
{\bf (e)} First, \eqref{aperf} implies $P\bigl(R_n=\Gamma(X_n)\bigr)=1$ provided that $R_0=\Gamma(X_0).$
Hence if \eqref{aperf} is true and $R_0=\Gamma(X_0),$ then $R_n$ can take only a finite number of values from $\{\Gamma(i):i\in\cald\}$
and cannot converge in distribution to a power-tailed random variable.
Therefore, \eqref{aperf} implies $P\bigl(K_1(\calx)+K_{-1}(\calx)=0\bigr)=1.$
\par
Assume now $P\bigl(K_1(\calx)+K_{-1}(\calx)=0\bigr)=1.$ Then
\eqref{kfor} and Theorem~\ref{inf-gk}-(b) imply $K_+=K_-=0.$ Then, by Theorem~B,
$P\bigl(A_1+c(y^*)B_1=c(y^*)\bigr)=1$ for a constant $c(y^*).$ Hence
$P\bigl(Q_0+M_0 \widehat R=c(y^*)\bigl|Z_0=y^*\bigr)=1,$
with $\widehat R=\frac{R-Q_0}{M_0}.$ Since $\widehat R$ is
conditionally independent of $(Q_0,M_0)$ given $Z_0,$
\beqn
\label{fina}
&&P\bigl(\widehat R=c_1(y^*)\bigl|Z_0=y^*\bigr)=P\bigl(R=c_1(y^*)\bigl|Z_{-1}=y^*\bigr)=1,
\\
\label{fina4}
&&P\bigl( Q_{0,\zeta(y^*)}+ M_{0,\zeta(y^*)} c_1(y^*)=c(y^*)\bigr)=1,
\feqn
for some constant $c_1(y^*).$ It follows from \eqref{fina} and the Markov property that $P\bigl(R=c_1(y^*)\bigl|Z_0=y\bigr)=1$
$\forall~y\in\cals$ s. t. $P(Z_0=y|Z_{-1}=y^*)>0.$ Then
\beqn
\label{fina1}
P\bigl(R=c(y^*)\bigl|Z_{-1}=x\bigr)=1~\forall~x\in\cals~\mbox{s. t.}~
P(Z_0=y^*|Z_{-1}=x)>0.
\feqn
Thus $P\bigl(R=c_1(x)=c(y^*)\bigl|Z_{-1}=x\bigr)=1$ whence $P(Z_0=y^*|Z_{-1}=x)>0.$ Since $y^*\in\cals$ is arbitrary,
$P\bigl(Q_{0,\zeta(Z_0)}+ M_{0,\zeta(Z_0)} c_1(Z_0)=c_1(Z_{-1})\bigr)=1$
due to \eqref{fina4}. This is exactly \eqref{aperf} once one sets $\Gamma(Y_n)=c_1(Z_{-n}).$
\par
Suppose now in addition that $(Q_{0,i},M_{0,i}),$ $i\in\cald,$ are non-degenerate.
Since the system $q+mc_1=c,$ $q+m\tilde c_1=\tilde c$ for unknown variables $q,m$
has a unique solution unless $c_1=\tilde c_1,$ it follows from
\eqref{fina4} that $c_1$ depends on $\zeta(y^*)$ only. Let $\Gamma(i)=c_1(y^*)$ for $i=\zeta(y^*).$
Then, \eqref{fina4} and \eqref{fina1} imply $P\bigl(Q_0+M_0\Gamma(X_0)=\Gamma(X_1)\bigr)=1.$
Hence, in virtue of (i) of Definition~\ref{typc}, $P(\Gamma(X_n)=c)=P\bigl(Q_0+M_0c=c\bigr)=1$
for some constant $c\in \rr.$
\small

\end{document}